\theoremstyle{plain} 
\newtheorem{thm}{Theorem}[section] 
\newtheorem{lem}[thm]{Lemma} 
\newtheorem{cor}[thm]{Corollary} 
\newtheorem{prop}[thm]{Proposition} 
\newtheorem{conj}[thm]{Conjecture} 
\theoremstyle{definition} 
\newtheorem{defn}[thm]{Definition} 
\newtheorem{rem}[thm]{Remark}
\newcommand{\cA}{{\mathcal A}} 
\newcommand{\cB}{{\mathcal B}} 
\newcommand{\cC}{{\mathcal C}} 
\newcommand{\cF}{{\mathcal F}} 
\newcommand{\cI}{{\mathcal I}} 
\newcommand{\cR}{{\mathcal R}} 
\newcommand{\cS}{{\mathcal S}} 
\newcommand{\bP}{{\mathbb P}} 
\newcommand{\bZ}{{\mathbb Z}} 
\newcommand{\bC}{{\mathbb C}} 
\newcommand{\bQ}{{\mathbb Q}} 
\newcommand{\bR}{{\mathbb R}} 
\newcommand{\on}[1]{{\operatorname{#1}}} 
\title{ 
The smallest line arrangement which is free but 
not recursively free} 
\author[T.~Abe]{Takuro Abe} 
\address{ 
Takuro Abe 
\newline 
Department of Mechanical Engineering and Science, Kyoto University 
\newline 
Yoshida-Honmachi, Sakyo-ku, 
Kyoto 606-8501, JAPAN 
} 
\email{abe.takuro.4c@kyoto-u.ac.jp} 
\author[H.~Kawanoue]{Hiraku Kawanoue} 
\address{Hiraku Kawanoue 
\newline 
Research Institute for Mathematical Sciences, 
Kyoto University 
\newline 
Kitashirakawa-Oiwakecho, Sakyo-ku, 
Kyoto 606-8502, JAPAN 
} 
\email{kawanoue@kurims.kyoto-u.ac.jp} 
\author[T.~Nozawa]{Takeshi Nozawa} 
\address{ 
Takeshi Nozawa 
\newline 
Maizuru National College of Technology 
\newline 
234 Shiraya, Maizuru 625-8511, JAPAN 
} 
\email{nozawa@kurims.kyoto-u.ac.jp} 
\thanks{ 
The first author's work was partially supported by Japan Society for the 
Promotion of Science 
Grant-in-Aid for Young Scientists (B), No.~24740012. 
\newline\indent 
The second author's work was partially supported by Japan Society for the 
Promotion of Science 
Grant-in-Aid for Young Scientists (B), No.~23740016. 
} 
\begin{document} 
\begin{abstract} 
In the category of free arrangements, inductively and recursively free 
arrangements are 
important. In particular, in the former, the conjecture by Terao 
asserting that freeness 
depends only on combinatorics holds true. A long standing problem 
whether all free 
arrangements are recursively free or not is settled by Cuntz and Hoge 
very recently, by 
giving a free but non-recursively free plane arrangement consisting of 
27 planes. 
 
In this paper, we construct a free but non-recursively free plane 
arrangement consisting 
of 13 planes, and show that this example is the smallest in the sense of 
the cardinality of 
planes. In other words, all free plane arrangements consisting of at 
most 12 planes are 
recursively free. To show it, we completely classify all free plane 
arrangements in terms of 
inductive freeness and three exceptions when the number of planes is at 
most 12. 
\end{abstract} 
\maketitle 
\begin{section}{Introduction} 
In the study of hyperplane arrangements, 
one of the most important problems is 
the freeness of them. In general, to determine whether 
a given arrangement is free or not is very difficult, 
and there is essentially only one way to check it, Saito's criterion (Theorem \ref{saito}). 
On the other hand, there is a nice way to construct a free arrangement 
from a given free arrangement, called 
the addition-deletion theorem (Theorem \ref{ad}). Since the empty arrangement 
is free, 
there is a natural question whether every free arrangement can be 
obtained, starting from empty arrangement, 
by applying addition and deletion theorems. For simplicity, 
for the rest of this section, let us 
concentrate our interest on the central arrangements in $\bC^3$. 
 
We say that a central arrangement $\cA$ is {\it inductively free} 
if it can be constructed by using only the addition theorem from 
the empty arrangement, and {\it recursively free} if we use 
both the addition and deletion theorems to construct it. It was very 
soon to be found a free arrangement which is not inductively free (see 
Example 4.59 in \cite{OT} for example). 
However, a free but non-recursively free arrangement has not been 
found for a long time. 
It was very recent that Cuntz and Hoge first found such an example in \cite{CH}, 
which consists of 27 planes over $\bQ(\zeta)$ with the fifth root 
of unity $\zeta$ in $\bC$. 
 
The aim of this paper is to give a new example of free 
but non-recursively free arrangements in $\bC^3$ consisting of 
the smallest number of planes. Our example consists of 
13 planes defined over $\bQ[\sqrt{3}]$.\footnote{ 
After posting the first version of this paper, 
we were informed by Professor Cuntz that 
he also found this example independently.  See \cite{C}.} 
To show that there are 
no such arrangements when the number of planes is 
strictly less than 13, we also investigate the set of all 
free arrangements $\cA$ with $|\cA| \le 12$. 
In other words, we give the complete classification of such 
free arrangements in terms of inductive, recursive freeness 
and three exceptions 
given in Definitions \ref{dH}, \ref{Pen} and \ref{443}. Now let us state 
our main theorem in the following. 
 
\begin{thm}\label{Main} 
Let $\cA$ be a central arrangement in $\bC^3$. 
\item[(1)] 
If $\cA$ is free with $|\cA|\leq12$, 
then $\cA$ is recursively free. More precisely, 
$\cA$ is either inductively free 
or one of the following arrangements 
characterized by their lattice structures. 
\begin{enumerate}\renewcommand{\labelenumi}{(\roman{enumi})} 
\item A dual Hesse arrangement, i.e., the arrangement $\cA$ 
with $|\cA|=9$, 
whose set $L_2(\cA)$ of codimension $2$ intersections 
consists of $12$ triple lines. 
\item A pentagonal arrangement, i.e., the arrangement $\cA$ 
with $|\cA|=11$ such that 
$L_2(\cA)$ consists of 
$10$ double lines, $5$ triple lines, 
$5$ quadruple lines and that 
any $H\in\cA$ contains at most 
$5$ lines of $L_2(\cA)$. 
\item 
A monomial arrangement associated to the group $G(4,4,3)$, 
i.e., the arrangement $\cA$ 
with $|\cA|=12$, 
whose $L_2(\cA)$ consists of $16$ triple lines and $3$ 
quadruple lines. 
\end{enumerate} 
Moreover, the lattices of arrangements in (i), (ii) and 
(iii) are realized over 
$\bQ[\sqrt{-3}]$, $\bQ[\sqrt{5}]$ and $\bQ[\sqrt{-1}]$, 
respectively. In particular, all free arrangements 
in $\bC^3$ are inductively free 
when $|\cA|\le 8$ or $|\cA|=10$. 
\item[(2)] 
There exists a free but not recursively free 
arrangement $\cA$ 
over $\bQ[\sqrt{3}]$ with $|\cA|=13$. 
\end{thm}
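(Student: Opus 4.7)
The plan is to address parts (1) and (2) separately. For part (1), I would proceed by strong induction on $n=|\cA|$ with a case analysis on the combinatorial type. The key numerical ingredients are Terao's factorization theorem, forcing $\chi(\cA,t)=(t-1)(t-e_2)(t-e_3)$ with $|\cA|=1+e_2+e_3$, together with the local contribution of each multiple point of $\cA$ to $\chi$; these pin down the admissible multi-sets of multiplicities of lines in $L_2(\cA)$ for each $n\le 12$. For each admissible lattice type, one searches for a line $H\in\cA$ such that the triple $(\cA,\cA\setminus\{H\},\cA^H)$ satisfies the hypotheses of the addition theorem; when such an $H$ exists, inductive freeness follows from the inductive hypothesis applied to $\cA\setminus\{H\}$. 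A direct combinatorial check shows that the lattice types where no such $H$ exists are exactly the three exceptions (i)--(iii), and rigidity of their $L_2$-structures determines $\cA$ up to isomorphism.

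For the three exceptional arrangements, freeness is confirmed by writing down an explicit basis of the module of logarithmic derivations and applying Saito's criterion (Theorem \ref{saito}). Recursive freeness is then established by exhibiting, in each case, a hyperplane $H_0\notin\cA$ such that the larger arrangement $\cA\cup\{H_0\}$ is inductively free and the deletion theorem applies to $(\cA\cup\{H_0\},\cA,(\cA\cup\{H_0\})^{H_0})$. The extra line $H_0$ is chosen to pass through selected multiple points of $\cA$ so that the enlarged arrangement falls into a combinatorial type already handled by the inductive procedure above. This is the principal extra piece of bookkeeping for part (1).

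For part (2), I would first exhibit an explicit candidate $\cA$ of $13$ lines defined over $\bQ[\sqrt{3}]$, modeled on the $G(4,4,3)$ exception in (iii) by adjoining one further line placed to pin down a rigid $L_2$-structure. Freeness of $\cA$ is verified by producing an explicit basis of the module of logarithmic derivations of the predicted exponents and invoking Saito's criterion. The main obstacle is the proof of non-recursive freeness: a priori one must rule out both the descent direction (some $H\in\cA$ with $\cA\setminus\{H\}$ free and recursively free, compatible with the addition-deletion theorem) and the ascent direction (some $H\notin\cA$ with $\cA\cup\{H\}$ free and recursively free). The descent case reduces to a finite check over the $13$ hyperplanes: for each $H$, compute the candidate exponents of $\cA\setminus\{H\}$ from Ziegler's multiarrangement on $\cA^H$ and verify that $\cA\setminus\{H\}$ is not free. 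The ascent case is controlled by the observation that any admissible $H$ must be supported on a line of $L_2(\cA)$, so only finitely many choices arise; showing each resulting $\cA\cup\{H\}$ fails the addition-deletion numerics, or is itself not free, then closes the argument. The borderline cases where $\cA\setminus\{H\}$ is free but $\cA^H$ has the wrong Ziegler exponents are the delicate ones and demand careful combinatorial analysis of the multiplicities of the induced multiarrangement.
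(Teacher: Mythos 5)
Your overall strategy for part (1) matches the paper's: induct on $|\cA|$, use the factorization of $\chi(\cA,t)$ together with point-count identities to classify the ``stuck'' arrangements (those with $n_{\cA,H}\le\min(a,b)$ for every $H\in\cA$, the set the paper calls $\cS_\ell$), and show each exception is recursively free by passing through a larger inductively free arrangement. Two things are underestimated, though. First, the classification is not a ``direct combinatorial check'': the counting identities leave six candidate invariants, and three of them (for $\ell=11$, with $F(\cA)=[1,14,2]$, $[4,11,3]$, $[7,8,4]$) must be eliminated by showing that the corresponding lattices are not realizable in $\bP_{\bC}^2$, which requires writing down coordinates over $\bQ[\sqrt{5}]$ and checking that the eleventh line cannot be placed. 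Second, your step ``inductive freeness follows from the inductive hypothesis applied to $\cA\setminus\{H\}$'' breaks exactly when $\cA\setminus\{H\}$ is one of the exceptions: the hypothesis then yields only recursive freeness, whereas the theorem asserts, e.g., $\cF_{10}=\cI_{10}$. You need the additional statements (the paper's Propositions \ref{9-10} and \ref{11-12}) that every free one-line extension of a dual Hesse or pentagonal arrangement is inductively free.

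The serious gap is in part (2). Your candidate --- the $G(4,4,3)$ arrangement with one further line adjoined --- is almost certainly not the right object: the paper itself shows that adjoining a suitable line to $G(4,4,3)$ yields a free arrangement with $\exp=(1,5,7)$ that \emph{is} recursively free, and $G(4,4,3)$ is defined over $\bQ[\sqrt{-1}]$, not $\bQ[\sqrt{3}]$. The actual example is an unrelated $13$-line arrangement with $\exp(\cA)=(1,6,6)$, $n_{\cA,H}=6$ for every $H$, and $F(\cA)=[21,3,3,3]$, built from a configuration with $\bZ/3$ rotational symmetry depending on a generic parameter; its freeness is proved via Yoshinaga's criterion applied to the Ziegler restriction onto the line at infinity, since no short explicit basis is available. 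The equal exponents are what make the non-recursiveness argument work: with $\exp=(1,6,6)$ and all $n_{\cA,H}=6$, Theorem \ref{ad} instantly kills the descent direction (no Ziegler computation on $\cA\setminus\{H\}$ is needed), and the ascent direction reduces to the finite check, which you correctly describe, of lines meeting $\cA$ in exactly $7$ points. Without a correct candidate, part (2) is not proved; the logical skeleton you give for non-recursive freeness is sound, but it has nothing to act on.
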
 
Our proof is based on the combinatorial method. 
We can check Theorem \ref{Main} by easy computations by hand, 
or just drawing a nice picture of our arrangement. 
 
Theorem \ref{Main} contains a characterization of 
free arrangements $\cA$ in $\bC^3$ 
with $|\cA|\le 12$. There have been several researches 
on this way, especially from the viewpoint of 
the conjecture by Terao (Conjecture \ref{TC}), 
which asks whether 
the freeness of an arrangement depends only on its combinatorics. 
For example, 
the conjecture by Terao was checked when $|\cA| \le 11$ in \cite{WY}, 
and $|\cA|\le 12 $ in \cite{FV}. 
By Theorem \ref{Main}, we can give another proof 
of the conjecture by Terao 
when $|\cA|\le 12$ which is originally due to Faenzi and Vall\`{e}s. 
\begin{cor}[Faenzi-Vall\`es \cite{FV}] 
The conjecture by Terao holds for central arrangements 
$\cA$ in $\bC^3$ with $|\cA|\leq12$. 
\end{cor}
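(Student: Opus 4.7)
The plan is to deduce the corollary directly from Theorem \ref{Main}(1) by observing that each of the four classification classes appearing there is determined purely by the intersection lattice. Fix two central arrangements $\cA$ and $\cA'$ in $\bC^3$ with isomorphic intersection lattices, suppose $\cA$ is free with $|\cA|\le 12$, and aim to show that $\cA'$ is free. Applying Theorem \ref{Main}(1) to $\cA$, either $\cA$ is inductively free or $\cA$ belongs to one of the three exceptional classes (i), (ii), (iii).

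In each exceptional case, I would note that the class is described purely in terms of data read off from $L_2(\cA)$ -- the number of double, triple and quadruple intersections and, in case (ii), an incidence bound on each line. These are lattice invariants, so $\cA'$ automatically falls in the same class. The Main theorem furthermore asserts that each of these three classes is characterized by its lattice structure, which I would read as saying that the lattice type determines the arrangement up to combinatorial equivalence. Consequently the lattice of $\cA'$ is realized by the dual Hesse, pentagonal, or $G(4,4,3)$ arrangement, each of which is recursively free by Theorem \ref{Main}(1) and hence free; so $\cA'$ is free.

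In the remaining inductively free case, I would take an inductive chain $\emptyset = \cA_0 \subset \cA_1 \subset \cdots \subset \cA_n = \cA$ witnessing the freeness of $\cA$ and transfer it to $\cA'$ via the lattice isomorphism. At each step, the intersection lattice of $\cA_i$, the lattice of the restriction of $\cA_i$ along the hyperplane added at step $i+1$, and the corresponding exponents are all invariants of $L(\cA)$; hence the hypotheses of the addition theorem (Theorem \ref{ad}) are preserved along the transferred chain, making $\cA'$ inductively free as well.

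The main point throughout is that both the addition theorem and the combinatorial invariants distinguishing cases (i)--(iii) depend only on the intersection lattice, so the corollary is essentially a combinatorial repackaging of Theorem \ref{Main}(1). The only subtlety -- not really an obstacle -- is the rigidity of the three exceptional types, i.e.\ that each of their lattices is realized by an essentially unique arrangement; this is precisely what is meant by the statement in the Main theorem that they are ``characterized by their lattice structures.''
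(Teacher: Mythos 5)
Your proof is correct and is essentially the argument the paper intends (the paper states the corollary as a consequence of Theorem \ref{Main} without writing out the details): inductively free arrangements satisfy Terao's conjecture via the chain-transfer argument you describe, and the three exceptional classes are combinatorially detectable and rigid. The one point to be careful about is that the rigidity you need is not merely the phrase ``characterized by their lattice structures'' in the theorem statement but is genuinely proved in \S\ref{S9}--\S\ref{S12}, where each exceptional lattice is shown to admit a unique realization up to the $\on{GL}(3,\bC)$-action, so citing those subsections closes your only flagged gap.
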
 
By investigating the structure of the classification in 
Theorem \ref{Main}, we can say that almost all the free 
arrangements with small exponents are inductively free. 
\begin{cor}\label{le4} 
Let $\cA$ be a free arrangement in $\bC^3$ 
with $\exp(\cA)=(1,a,b)$. 
If $\min(a,b)\leq4$, then 
$\cA$ is either inductively free or 
a dual Hesse arrangement appearing in Theorem \ref{Main}. 
\end{cor}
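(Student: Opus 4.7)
The plan is induction on $|\cA|$, with Theorem \ref{Main} as the base case and the addition-deletion theorem (Theorem \ref{ad}) driving the inductive step. Assume without loss of generality $a\le b$, so $a\le 4$. For the base case $|\cA|\le 12$, I first compute the exponents of the three exceptional arrangements in Theorem \ref{Main} from their $L_2$-data, using the identity $e_2+e_3+e_2e_3=\sum_{X\in L_2(\cA)}(m(X)-1)$ valid for any free $\cA$ in $\bC^3$ with $\exp(\cA)=(1,e_2,e_3)$. Direct substitution yields exponents $(1,4,4)$ for the dual Hesse, $(1,5,5)$ for the pentagonal, and $(1,5,6)$ for the $G(4,4,3)$-arrangement. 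Since the hypothesis $a\le 4$ rules out the last two, only the two desired conclusions remain in the base case.

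For the inductive step assume $|\cA|\ge 13$. The goal is to exhibit $H\in\cA$ with $|\cA^H|=a+1$. Granting this, the addition-deletion theorem makes $\cA':=\cA\setminus\{H\}$ free with $\exp(\cA')=(1,a,b-1)$; we have $|\cA'|\ge 12$ and still $\min\le 4$, so the inductive hypothesis says $\cA'$ is inductively free or a dual Hesse arrangement. Since a dual Hesse has only $9$ hyperplanes, the latter is impossible, and $\cA'$ is inductively free. The restriction $\cA^H$ is a line arrangement in the plane $H$, hence automatically inductively free, so the addition theorem concludes that $\cA$ is inductively free.

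The heart of the argument is producing $H$ with $|\cA^H|=a+1$. I would proceed by case analysis on $a\in\{1,2,3,4\}$, using the combinatorial identities
\[
\sum_{X\in L_2(\cA)}(m(X)-1)=a+b+ab, \qquad \sum_{X\in L_2(\cA)}\binom{m(X)}{2}=\binom{1+a+b}{2},
\]
together with $\sum_{H\in\cA}|\cA^H|=\sum_{X\in L_2(\cA)}m(X)$, to constrain the intersection lattice. The cases $a=1,2$ force near-pencil structures on $\cA$, and any hyperplane outside the dominant flat has the desired restriction number. For $a=3,4$ one must exclude the pathology---analogous to that exhibited by $G(4,4,3)$ at $a=5$---in which every restriction $\cA^H$ has size different from $a+1$, by invoking the classification already established in lower cardinality. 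The case $a=4$ with $b$ large is where I expect the technical difficulty to concentrate, and the hypothesis $a\le 4$ is sharp: the $G(4,4,3)$-style obstruction to single-step deletion arises as soon as $a=5$ is allowed.
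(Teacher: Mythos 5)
Your overall architecture (induction on $|\cA|$, base case from the classification, deletion of a hyperplane with small restriction, addition theorem to lift inductive freeness) is the same as the paper's, and your base-case computation of the exponents of the three exceptional arrangements is correct. But the step you yourself identify as ``the heart of the argument'' --- producing $H\in\cA$ with $n_{\cA,H}=a+1$ (or $b+1$) when $|\cA|\geq 13$ --- is not actually proved. You offer a case analysis on $a\in\{1,2,3,4\}$, assert that $a=1,2$ give near-pencils, and for $a=3,4$ propose to ``exclude the pathology by invoking the classification already established in lower cardinality.'' That last move does not work as stated: the classification covers arrangements with at most $12$ hyperplanes, whereas here $|\cA|\geq 13$, and there is no way to transfer information from the classified range to $\cA$ without first performing the very deletion whose existence is in question. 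You then concede that the case $a=4$, $b$ large, is unresolved. So the proposal is a correct skeleton with the load-bearing lemma missing.

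The paper closes exactly this gap with a short counting argument (Lemma \ref{a-2} plus two lines). Suppose no good $H$ exists, i.e.\ $\cA\in\cS_\ell$ with $\exp(\cA)=(1,a,b)$, $a\leq b$, $a\leq 4$. Lemma \ref{a-2} shows that then $\mu_P(\cA)\leq a-2$ for \emph{every} point $P$: if some $P_0$ had $\mu_{P_0}(\cA)\geq a$, any line not through $P_0$ would meet at least $a+1$ others, forcing a pencil and hence $a=0$; and if $\mu_{P_0}(\cA)=a-1$, the condition $n_{\cA,H}\leq a$ forces every intersection point onto the $a$ lines through $P_0$, making $\cA$ supersolvable with $\exp(\cA)=(1,a-1,\ell-a)$, incompatible with $a\leq b$. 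Given $\mu_P(\cA)\leq a-2$ everywhere and $n_{\cA,H}\leq a$, one gets
$$
\ell-1=\mu_{\cA,H}=\sum_{P\in H}\mu_P(\cA)\leq (a-2)\,n_{\cA,H}\leq (a-2)a\leq 8,
$$
contradicting $\ell\geq 11$. Note this handles all $\ell\geq 11$ uniformly --- no case split on $a$, and no appeal to the classification beyond $|\cA|\leq 10$. Your identities on $\sum(m(X)-1)$ and $\sum\binom{m(X)}{2}$ are true but are global constraints; the decisive inequality is the local one along a single line, which your proposal never isolates. As written, the proof is incomplete at its central step.
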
 
 
The organization of our paper is as follows. 
In \S \ref{prel} we introduce several definitions and results 
which will be used in this paper. 
In \S \ref{S9}, \S \ref{S11} and \S \ref{S12} we prove Theorem \ref{Main} (1). 
In \S \ref{13} we prove Theorem \ref{Main} (2). 
 
\subsection*{Acknowledgements} 
We are grateful to M.~Yoshinaga for his helpful comments. 
\end{section} 
\begin{section}{Preliminaries}\label{prel} 
In this section, we summarize several definitions and results 
which will be used in this 
paper. 
For the basic reference on the arrangement theory, 
we refer Orlik-Terao \cite{OT}.

Let $V=\bC^n$. An {\it arrangement of hyperplanes} $\cA$ is 
a finite set of affine hyperplanes in $V$. An arrangement $\cA$ 
is {\it central} if every hyperplane is linear. 
For a hyperplane $H \subset V$, define 
$$ 
\cA \cap H=\{H \cap H' \neq \emptyset \mid H' \in \cA,\ H ' \neq H\}. 
$$ 
Hence $\cA \cap H$ is an arrangement in an $(n-1)$-dimensional vector space $H$. 
Let us define a {\it cone} $\on{c}\cA$ of an affine arrangement 
$\cA$ as follows. If $\cA$ is defined by a 
polynomial equation $Q=0$, then $\on{c}\cA$ is defined by 
$z\cdot\on{c}Q=0$, where $\on{c}Q$ is the homogenized 
polynomial of $Q$ by the new coordinate $z$. 
When $\cA$ is central, let us fix a defining linear form 
$\alpha_H \in V^*$ for each $H \in \cA$. 
 
From now on, let us concentrate our interest on 
central arrangements in $\bC^n$ when $n=2$ or $3$. 
So arrangements of lines or planes. 
Even when $n=3$, identifying $\bC^3$ with $P_{\bC}^2$, we also say 
they are line arrangements when there are 
no confusions. Let $S=S(V^*)=\bC[x_1,\ldots,x_n]$ be 
the coordinate ring of $V$. 
For a central arrangement $\cA$, define 
$$ 
D(\cA)=\left\{ \theta \in \bigoplus_{i=1}^n S \partial/\partial {x_i} \mid 
\theta(\alpha_H) \in S \alpha_H\ \text{for all}\ H \in \cA\right\}. 
$$ 
$D(\cA)$ is called the {\it logarithmic derivation module}. 
$D(\cA)$ is reflexive, but not free in general. 
We say that $\cA$ is free with {\it exponents} $(d_1,\ldots,d_n)$ if 
$D(\cA)$ has a homogeneous free basis $\theta_1,\ldots,\theta_n$ with 
$\deg \theta_i=d_i\ (i=1,\ldots,n)$. 
Here the {\it degree} of a homogeneous derivation 
$\theta=\sum_{i=1}^n f_i \partial/\partial {x_i}$ 
is defined by $\deg f_i$ for all non-zero $f_i$. 
Note that the Euler derivation $\theta_E= 
\sum_{i=1}^n x_i \partial/\partial x_i$ is contained in 
$D(\cA)$. 
In particular, it is easy to show that 
$ 
D(\cA)$ has $S \theta_E $ as its direct summand 
for a non-empty arrangement $\cA$. 
Hence $\exp(\cA)$ always contains $1$ if $\cA$ is not empty. 
 
To verify the freeness of $\cA$, the following 
Saito's criterion is essential. 
 
\begin{thm}[Saito's criterion, \cite{S}] 
Let $\theta_1,\ldots,\theta_n \in D(\cA)$. Then the following 
three conditions are equivalent. 
\begin{itemize} 
\item[(1)] 
$\cA$ is free with basis $\theta_1,\ldots,\theta_\ell$. 
\item[(2)] $\det [\theta_i(x_j)]=c \prod_{H \in \cA} \alpha_H$ 
for some non-zero $c \in \bC$. 
\item[(3)] $\theta_1,\ldots,\theta_n$ are all homogeneous derivations, 
$S$-independent and 
$\sum_{i=1}^n \deg\theta_i=|\cA|$. 
\end{itemize} 
\label{saito} 
\end{thm}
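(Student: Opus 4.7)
The plan is to run the equivalence through the cyclic chain $(3)\Rightarrow(2)\Rightarrow(1)\Rightarrow(3)$, all powered by a single divisibility observation: \emph{for any $\theta_1,\ldots,\theta_n\in D(\cA)$, the polynomial $Q:=\prod_{H\in\cA}\alpha_H$ divides $\det[\theta_i(x_j)]$ in $S$.} To establish this Key Lemma I would fix $H\in\cA$, change linear coordinates so that $\alpha_H=x_1$, and observe that $\theta_i(\alpha_H)\in S\alpha_H$ forces the first column of $[\theta_i(x_j)]$ to lie in the ideal $(x_1)$; hence $x_1\mid\det$, and because the $\alpha_H$ are pairwise coprime and $S$ is a UFD, the full product $Q$ divides $\det[\theta_i(x_j)]$.

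With the Key Lemma in hand, the implication $(3)\Rightarrow(2)$ reduces to degree-counting: homogeneity makes $\det[\theta_i(x_j)]$ homogeneous of total degree $\sum_i\deg\theta_i=|\cA|=\deg Q$, $S$-independence of the $\theta_i$ forces $\det\ne 0$, and the divisibility $Q\mid\det$ then yields $\det[\theta_i(x_j)]=cQ$ for some $c\in\bC^\times$. The complementary direction $(1)\Rightarrow(3)$ is similarly routine: a free basis is by definition homogeneous and $S$-independent, and the degree sum $\sum_i\deg\theta_i=|\cA|$ falls out of a valuation computation that locates, at the generic point of each $H\in\cA$, exactly one vanishing factor of $\alpha_H$ in $\det[\theta_i(x_j)]$.

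The main content is $(2)\Rightarrow(1)$. Assuming $\det[\theta_i(x_j)]=cQ$ with $c\ne 0$, $S$-independence of $\theta_1,\ldots,\theta_n$ is immediate. To prove that they span $D(\cA)$, I would take an arbitrary $\theta\in D(\cA)$ and solve the linear system $\theta=\sum_i g_i\theta_i$ over the fraction field of $S$ by Cramer's rule: each $g_i$ is the ratio of $\det M_i$ to $cQ$, where $M_i$ is obtained by replacing the $i$-th row of $[\theta_i(x_j)]$ by $(\theta(x_1),\ldots,\theta(x_n))$. The critical point is that $M_i$ is itself the coefficient matrix of $n$ derivations all belonging to $D(\cA)$, so the Key Lemma re-applies and yields $Q\mid\det M_i$. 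Hence $g_i\in S$, and $\theta$ lies in the $S$-span of the $\theta_i$.

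The main obstacle is precisely this last implication: the identity $\det[\theta_i(x_j)]=cQ$ is by itself only a numerical constraint on the $\theta_i$, and one must engineer a mechanism that converts it into the genuinely module-theoretic statement that the $\theta_i$ generate $D(\cA)$. The Cramer's rule trick, combined with a second application of the Key Lemma to the augmented family $\theta_1,\ldots,\theta,\ldots,\theta_n$, is exactly this mechanism; once it is in hand, the remaining implications amount to degree bookkeeping and local checks at generic points of hyperplanes.
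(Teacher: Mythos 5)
The paper offers no proof of this theorem: it is quoted verbatim from Saito's article \cite{S} (cf.\ also Theorem~4.19 of \cite{OT}), so there is nothing in-paper to compare against. Your outline is the classical proof and is essentially correct. The Key Lemma ($Q:=\prod_{H\in\cA}\alpha_H$ divides $\det[\theta_i(x_j)]$ for \emph{any} $n$-tuple in $D(\cA)$) is established exactly as you say, by expanding along the column $\theta_i(\alpha_H)\in(\alpha_H)$ in coordinates with $\alpha_H=x_1$ and then using that $S$ is a UFD and the $\alpha_H$ are pairwise coprime; $(3)\Rightarrow(2)$ is the degree count you describe ($S$-independence gives $\det\neq0$, and a nonzero homogeneous multiple of $Q$ of degree $\deg Q$ is a constant times $Q$); and the Cramer's-rule argument for $(2)\Rightarrow(1)$, with the Key Lemma reapplied to the augmented family $\theta_1,\ldots,\theta,\ldots,\theta_n$ to show each coefficient $\det M_i/(cQ)$ lies in $S$, is precisely the mechanism that makes the theorem work.

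The one place your sketch is thinner than it should be is the degree identity in $(1)\Rightarrow(3)$. The valuation computation you invoke does show $v_H(\det[\theta_i(x_j)])=1$ for each $H$ (locally at the generic point of $H$ the module $D(\cA)$ has basis $\alpha_H\partial/\partial x_1,\partial/\partial x_2,\ldots,\partial/\partial x_n$ and two bases differ by a unit determinant), but that only bounds $\deg\det$ from below by $|\cA|$; you must also exclude irreducible factors of the determinant supported away from $\bigcup_{H\in\cA}H$. The standard one-line remedy: $Q\,\partial/\partial x_j\in D(\cA)$ for every $j$, so writing these in the basis and taking determinants yields $\det[\theta_i(x_j)]\mid Q^n$, forcing every irreducible factor of the determinant to be one of the $\alpha_H$; the local computation then gives $\det[\theta_i(x_j)]=cQ$ and hence $\sum_i\deg\theta_i=|\cA|$. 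A last cosmetic point: in $(2)\Rightarrow(1)$ the $\theta_i$ are not assumed homogeneous, so what you produce is a free basis, and freeness in the paper's sense (existence of a \emph{homogeneous} basis) follows because $D(\cA)$ is a graded module over the polynomial ring.
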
 
 
For a {\it multiplicity} $m\colon\cA \rightarrow \bZ_{>0}$, 
we can define the logarithmic 
derivation module $D(\cA,m)$ of a {\it multiarrangement} $(\cA,m)$ by 
$$ 
D(\cA,m)=\left\{ 
\theta \in \bigoplus_{i=1}^n S \partial/\partial {x_i} \mid 
\theta(\alpha_H) \in S \alpha_H^{m(H)}\ \text{for all}\ H \in \cA 
\right\}. 
$$ 
The freeness, exponents, and Saito's criterion 
for a multiarrangement can be defined in the same manner as 
an arrangement case. Note that the Euler derivation is not 
contained in $D(\cA,m)$ in general. 
 
Recall that every central (multi)arrangement in $\bC^2$ is free 
since $\dim_\bC \bC^2=2$ and $D(\cA ,m)$ is reflexive. Hence the first 
non-free central arrangement occurs when $n=3$. 
Define the {\it intersection lattice} $L(\cA)$ of $\cA$ by 
$$ 
L(\cA)=\left\{ \bigcap_{H \in \cB} H \mid \cB \subset \cA\right\}. 
$$ 
$L(\cA)$ has a poset structure with an order 
by the reverse inclusion. $L(\cA)$ is considered 
to be a combinatorial information of $\cA$. 
 
Define the {\it M\"{o}bius function} 
$\mu\colon L(\cA) \rightarrow \bZ$ by 
$$ 
\mu(V)=1,\quad 
\mu(X)=-\sum_{Y \in L(\cA),\ X \subsetneqq Y \subset V} \mu(Y) 
\quad (X \neq V). 
$$ 
Then the {\it characteristic polynomial} $\chi(\cA,t)$ 
of $\cA$ is defined by 
$\chi(\cA,t)=\sum_{X \in L(\cA)} \mu(X)\ t^{\dim X}$. 
It is known that 
$$ 
\chi(\cA,t)=t^n\on{Poin}(\bC^n \setminus \cup_{H \in \cA} H,-t^{-1}). 
$$ 
Hence $\chi(\cA,t)$ is both combinatorial and topological invariants of an arrangement. 
The freeness and $\chi(\cA,t)$ are related by the following formula. 
 
\begin{thm}[Factorization, \cite{T2}]\label{factorization} 
Assume that a central arrangement $\cA$ is free with 
$\exp(\cA)=(d_1,\ldots,d_n)$. Then 
$$ 
\chi(\cA,t)=\prod_{i=1}^n (t-d_i). 
$$ 
\end{thm}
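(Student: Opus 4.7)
The plan is to follow Terao's original strategy, which deduces the factorization from a Solomon-Terao-type identity that expresses the characteristic polynomial of any central arrangement in terms of the graded Hilbert series of $D(\cA)$ and its exterior powers over $S$. Once such an identity is available, freeness fixes the relevant Hilbert series in closed form, and the product structure of $\chi$ appears essentially as a consequence of the symmetric-function identity $\prod_i(1+y_i t)=\sum_p e_p(y_1,\ldots,y_n)t^p$.

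Concretely, the first step is to record that freeness with $\exp(\cA)=(d_1,\ldots,d_n)$ is equivalent to the graded $S$-module isomorphism $D(\cA)\cong\bigoplus_{i=1}^{n}S(-d_i)$ determined by the basis $\theta_1,\ldots,\theta_n$. This isomorphism passes to the exterior powers $D^p(\cA):=\bigwedge_{S}^{p}D(\cA)$, which are therefore free on the basis $\{\theta_{i_1}\wedge\cdots\wedge\theta_{i_p}\}_{i_1<\cdots<i_p}$, so their Hilbert series take the closed form
$$H(D^p(\cA);x)=\frac{e_p(x^{d_1},\ldots,x^{d_n})}{(1-x)^n},$$
where $e_p$ denotes the $p$-th elementary symmetric polynomial. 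The second step is to invoke the Solomon-Terao identity that extracts $\chi(\cA,t)$ from these Hilbert series via an $x\to1$ limiting procedure. Substituting the formula above and using the generating-function expansion $\sum_p e_p(y_1,\ldots,y_n)t^p=\prod_i(1+y_i t)$ produces the Poincar\'{e} polynomial $\prod_i(1+d_i t)$ of the complement, and the identity $\chi(\cA,t)=t^n\on{Poin}(\bC^n\setminus\bigcup_{H\in\cA}H,-t^{-1})$ recalled in the text converts this into $\prod_i(t-d_i)$.

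The decisive and least routine step is the Solomon-Terao identity itself. It is the conceptual bridge between algebraic data, namely the Hilbert series of logarithmic modules, and the purely combinatorial M\"{o}bius-theoretic definition of $\chi(\cA,t)$, and its proof requires a careful L'Hopital-type analysis of the $x\to1$ limit in which both $(1-x)^n$ and the factor it multiplies vanish to order $n$, together with the Orlik-Solomon description of the cohomology of the complement. Once that identity is accepted, the computation above amounts only to a manipulation of elementary symmetric polynomials, so the freeness hypothesis enters in an essentially mechanical fashion and no further heavy input is required.
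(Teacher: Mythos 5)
This theorem is not proved in the paper at all: it is quoted as a known result with a citation to Terao's 1981 paper \cite{T2}, so there is no in-paper argument to compare yours against. Judged on its own, your outline is the standard modern route (the one followed in Orlik--Terao, Theorem 4.137) and is sound as a sketch: freeness gives $D(\cA)\cong\bigoplus_i S(-d_i)$, the modules $D^p(\cA)$ of $p$-multiderivations are then free with basis the wedges $\theta_{i_1}\wedge\cdots\wedge\theta_{i_p}$, their Hilbert series are $e_p(x^{d_1},\ldots,x^{d_n})/(1-x)^n$, and substituting into the Solomon--Terao formula and factoring $\sum_p e_p(y_i)t^p=\prod_i(1+y_it)$ yields $\prod_i(t-d_i)$ after the $x\to1$ limit. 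Two caveats. First, as you yourself concede, the entire mathematical content is concentrated in the Solomon--Terao identity, which you invoke rather than prove; since that identity is a substantial theorem in its own right, your text is an architecture of the proof rather than a proof, which is acceptable here only because the paper itself treats the factorization theorem as citable background. Note also that the Solomon--Terao formula produces the combinatorially defined $\chi(\cA,t)$ directly from the Hilbert series, so your detour through the Poincar\'e polynomial of the complement (and hence through the Orlik--Solomon theorem) is an unnecessary extra dependency. Second, a small precision: the modules appearing in the Solomon--Terao formula are the $D^p(\cA)$ defined by a logarithmic condition on $p$-multiderivations, not literally $\bigwedge^p_S D(\cA)$; the identification $D^p(\cA)\cong\bigwedge^p D(\cA)$ is a separate (easy, via the Saito-type criterion for $D^p$) fact valid in the free case, and it should be stated since the canonical map fails to be an isomorphism in general. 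Finally, be aware that Terao's original proof in \cite{T2} predates the Solomon--Terao paper and proceeds differently, so attributing this exact strategy to ``Terao's original strategy'' is historically off, though mathematically harmless.
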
 
Theorem \ref{factorization} implies that the algebra of 
an arrangement might control the combinatorics and the topology of it. 
However, the converse is not true in general. For example, there 
is a non-free arrangement in $\bC^3$ 
whose characteristic polynomial factorizes over 
the ring of integers (see \cite{OT}). Hence it is natural to ask 
how much algebra and combinatorics of arrangements are related. 
 
The following conjecture is one of the largest ones in 
the theory of arrangements. 
 
\begin{conj}[Terao]\label{TC} 
The freeness of an arrangement depends only on the combinatorics. 
\end{conj}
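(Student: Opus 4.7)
Terao's conjecture is one of the most famous open problems in arrangement theory, so a complete proof is beyond reach; I therefore sketch the three main plans by which such a proof would be attempted and indicate the principal obstacle of each. The shared difficulty is that $D(\cA)$ is defined via the defining linear forms $\alpha_H \in V^*$, which are not combinatorial objects, while the conjecture asserts that freeness depends only on $L(\cA)$.

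The most direct plan is to construct a combinatorial avatar of $D(\cA)$: from $L(\cA)$ alone, build a graded module $\widetilde{D}(L(\cA))$ over some abstract polynomial ring and show that $D(\cA)$ and $\widetilde{D}(L(\cA))$ have the same Hilbert series or are even isomorphic. Theorem \ref{factorization} already guarantees that, when $\cA$ is free, the candidate exponents are the roots of $\chi(\cA,t)$, hence combinatorial; the missing step is to promote this numerical match to a module-level comparison that actually detects freeness. A complementary plan is induction on $|\cA|$ via the addition-deletion theorem (Theorem \ref{ad}): given two realizations $\cA, \cA'$ of a common lattice, one tries to pick $H \in \cA$ and $H' \in \cA'$ so that the lattices of the deletions $\cA \setminus \{H\}$, $\cA' \setminus \{H'\}$ and of the restrictions $\cA \cap H$, $\cA' \cap H'$ agree, and then lets freeness propagate. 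The obstacle is that the hypotheses of addition-deletion require numerical coincidences between exponents of the smaller arrangements and the restriction data, and these coincidences are not obviously lattice-invariant.

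The third, currently productive plan is direct classification, as executed in Theorem \ref{Main}: for fixed $|\cA|$ or fixed $\exp(\cA)$, enumerate all realizable lattice types, decide freeness for each, and confirm the conjecture in that range. The principal obstacle to extending this method to an unconditional proof is combinatorial explosion; the number of realizable lattice types grows too rapidly with $|\cA|$ for a direct enumeration to terminate, and no structural reduction to finitely many exceptional families is presently available. A realistic program would combine the three plans, with classification absorbing exceptional lattices while a combinatorial $\widetilde{D}$ handles the generic case, but carrying either half through in full generality remains the central open problem of the subject.
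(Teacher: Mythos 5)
You were asked to prove Conjecture \ref{TC}, which is an open problem; the paper offers no proof of it, so there is no argument of the paper's to compare yours against, and you correctly decline to claim one. Your third ``plan'' --- verification by exhaustive classification for fixed $|\cA|$ --- is precisely what the paper actually carries out: Theorem \ref{Main} classifies all free arrangements in $\bC^3$ with $|\cA|\le 12$ into inductively free ones plus three explicit lattice types (dual Hesse, pentagonal, and the $G(4,4,3)$ monomial arrangement), and the Corollary deduces Terao's conjecture in that range because membership in each class is visibly determined by $L(\cA)$. Your first two plans (a combinatorial avatar of $D(\cA)$, and lattice-controlled propagation through Theorem \ref{ad}) are fair descriptions of known strategies, and the obstacles you name --- that the exponents are combinatorial by Theorem \ref{factorization} but the module structure is not obviously so, and that the numerical hypotheses of addition--deletion are not evidently lattice-invariant --- are the genuine ones. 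The only thing to be clear about is that your text is a survey, not a proof, and should not be read as progress on the conjecture: the honest state of affairs is that Conjecture \ref{TC} remains open even for $n=3$, and the paper establishes only the case $|\cA|\le 12$.
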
 
 
Conjecture \ref{TC} is open even when $n=3$. In \cite{T}, 
Terao introduced a nice family of 
free arrangements in which Conjecture \ref{TC} holds. 
To state it, let us introduce the following 
key theorem in this paper. 
 
\begin{thm}[Addition-Deletion, \cite{T}] \label{ad} 
Let $\cA$ be a free arrangement in $\bC^3$ with $\exp(\cA)=(1,d_1,d_2)$. 
\item[(1)](the addition theorem). Let $H \not \in \cA$ be a linear plane. 
Then $\cB=\cA \cup\{H\}$ is free with $\exp(1,d_1,d_2+1)$ if and only if 
$|\cA \cap H|=1+d_1$. 
\item[(2)](the deletion theorem). Let $H \in \cA$. 
Then $\cA'=\cA \setminus \{H\}$ is free with $\exp(1,d_1,d_2-1)$ if and only if 
$|\cA' \cap H|=1+d_1$. 
\end{thm}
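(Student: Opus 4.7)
My plan is to prove the two statements simultaneously, since they are formally equivalent: the addition statement applied to $\cA$ with added hyperplane $H$ is the deletion statement applied to $\cA \cup \{H\}$ with removed hyperplane $H$. It therefore suffices to handle, say, the addition direction: assuming $\cA$ is free with $\exp(\cA) = (1, d_1, d_2)$ and $H \notin \cA$, show that $\cB = \cA \cup \{H\}$ is free with $\exp(\cB) = (1, d_1, d_2 + 1)$ if and only if $|\cA \cap H| = 1 + d_1$.

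The main device is the relationship between $D(\cA)$ and $D(\cB)$: a derivation $\theta \in D(\cA)$ lies in $D(\cB)$ exactly when $\theta(\alpha_H) \in (\alpha_H)$, and otherwise $\alpha_H \theta \in D(\cB)$. Given a homogeneous basis $\theta_E, \theta_1, \theta_2$ of $D(\cA)$ of degrees $1, d_1, d_2$, the Euler derivation $\theta_E$ is automatically tangent to $H$. One then analyzes which linear combinations of $\theta_1, \theta_2$ are tangent to $H$ by means of the restriction map $D(\cA) \to \on{Der}_{\bC}(S/(\alpha_H))$ sending $\theta$ to its reduction modulo $\alpha_H$. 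The image lies in the logarithmic derivation module of the restricted arrangement $\cA \cap H \subset H \cong \bC^2$ endowed with an appropriate multiplicity, and the kernel consists of derivations whose coefficients are all in $(\alpha_H)$.

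The heart of the proof is the construction of a basis of $D(\cB)$ of the required degrees. Since $\cA \cap H$ is a central arrangement in a $2$-dimensional space, its logarithmic derivation module is automatically free with some pair of exponents $(e_1, e_2)$ satisfying $e_1 + e_2 = |\cA \cap H|$. Using a short exact sequence of graded $S$-modules relating $D(\cA)$, its shift by multiplication by $\alpha_H$, and the restriction module, combined with Saito's criterion (Theorem \ref{saito}), I would conclude: $\cB$ is free with exponents $(1, d_1, d_2 + 1)$ if and only if the restriction data provides a tangent derivation of degree $d_1$, which via the freeness of arrangements in $\bC^2$ is equivalent to $d_1 \in \{e_1, e_2\}$. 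A degree count using $e_1 + e_2 = |\cA \cap H|$, the sum $\sum_i \deg\theta_i = |\cA|$ for $\cA$, and $|\cB| = |\cA| + 1$ then collapses this condition to $|\cA \cap H| = 1 + d_1$, at which point $\det[\theta_i(x_j)]$ picks up exactly one extra factor of $\alpha_H$ as demanded by Saito's criterion.

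The main obstacle I expect is the precise form of the restriction exact sequence and the identification of the image of the restriction map with the derivation module of a multiarrangement on $\cA \cap H$; this is a technical commutative-algebra step that uses the reflexivity of $D(\cA)$ and a local lifting argument near $H$. Once the exact sequence and the freeness of the $2$-dimensional restriction are in hand, the translation into the counting condition $|\cA \cap H| = 1 + d_1$, as well as the converse implication (starting from freeness of $\cB$ and producing a basis of $D(\cA)$ in the right degrees), is a mechanical bookkeeping of degrees via Saito's criterion.
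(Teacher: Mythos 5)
This statement is quoted from Terao's paper \cite{T}; the present paper gives no proof of it at all, so there is nothing internal to compare your argument against. Judged on its own terms, your proposal contains a genuine logical gap right at the start: the claim that the addition and deletion statements are ``formally equivalent'' is false. The addition theorem takes freeness of the \emph{smaller} arrangement $\cA$ as a hypothesis and characterizes freeness of $\cB=\cA\cup\{H\}$; the deletion theorem takes freeness of the \emph{larger} arrangement as a hypothesis and must \emph{establish} freeness of $\cA'=\cA\setminus\{H\}$ from the counting condition $|\cA'\cap H|=1+d_1$. You cannot obtain that implication by applying the addition theorem to $\cA'$ and $H$, because doing so presupposes exactly the freeness of $\cA'$ that is to be proved. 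So after ``handling the addition direction'' the hard half of statement (2) remains unproved. It needs its own argument: using the exact sequence $0\to D(\cA')(-1)\xrightarrow{\ \cdot\alpha_H\ }D(\cA)\xrightarrow{\ \rho\ }D(\cA\cap H)$ one shows that when $|\cA'\cap H|=1+d_1$ a basis of $D(\cA)$ can be modified so that the degree-$d_2$ element lies in $\alpha_H\,D(\cA')$, and dividing it by $\alpha_H$ produces the missing degree-$(d_2-1)$ generator, with Saito's criterion closing the argument.

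Two further points in the addition half need repair. First, the surjectivity of the restriction map $\rho$ (equivalently, that the image is all of $D(\cA\cap H)$ rather than a proper submodule) is the actual crux of Terao's proof, not a routine ``technical commutative-algebra step''; without it the degree count does not go through. Second, your degree bookkeeping is off: the restriction of $\cB$ to $H$ relevant here is the \emph{simple} line arrangement in $H\cong\bC^2$, whose exponents are forced to be $(e_1,e_2)=(1,|\cA\cap H|-1)$, not an arbitrary pair summing to $|\cA\cap H|$. Consequently the condition ``$d_1\in\{e_1,e_2\}$'' reads $d_1\in\{1,|\cA\cap H|-1\}$ and does not collapse to $|\cA\cap H|=1+d_1$ as you assert (the branch $d_1=1$ is not excluded by your argument); if instead you intend the Ziegler multirestriction, then its exponents satisfy $e_1+e_2=|\cA|$, the relevant freeness criterion is Yoshinaga's (Proposition \ref{yoshinaga}), and the bookkeeping is again different from what you wrote. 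Either way the final ``collapse'' step as described does not yield the stated equivalence.
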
 
\begin{defn}\label{IR} 
\item[(1)] A central plane arrangement $\cA$ is {\it inductively free} 
if there is a filtration of subarrangements 
$\cA_1 \subset \cA_2 \subset \cdots \subset \cA_\ell=\cA$ such that 
$|\cA_i|=i\ (1\leq i\leq\ell)$ and every $\cA_i$ is free. 
\item[(2)] 
A central plane arrangement $\cA$ is {\it recursively free} 
if there is a sequence of arrangements 
$\emptyset=\cB_0,\cB_1,\cB_2,\ldots,\cB_t=\cA$ 
such that $||\cB_{i+1}|-|\cB_i||=1\ (1\leq i\leq t-1)$ and 
every $\cB_i$ is free. 
\end{defn} 
 
Roughly speaking, an inductively free arrangement is 
a free arrangement constructed from an empty arrangement by 
using only the addition theorem, and 
a recursively free arrangement is a free arrangement constructed 
from an empty arrangement by using both 
the addition and deletion theorems. 
It is known that in the category of inductively free 
arrangements, Conjecture \ref{TC} is true, but open in that of 
recursively free arrangements. 
\begin{rem} 
Theorem \ref{ad} and Definition \ref{IR} are different from 
those in an arbitrary dimensional case. They coincide when $n=3$ 
because every arrangement in $\bC^2$ is free. 
For a general definition, see \cite{T} and \cite{OT} for example. 
\end{rem} 
\begin{defn} 
For $\ell\in\bZ_{\geq0}$, we define the sets $\cF_\ell$, 
$\cI_\ell$ and $\cR_\ell$ as follows. 
\begin{eqnarray*} 
\cF_{\ell}&=&\left\{ 
\text{free arrangement}\ \cA\ \text{in}\ \bC^3 
\ \text{with }\ |\cA|=\ell\right\}, 
\\ 
\cI_{\ell}&=&\left\{\cA\in\cF_\ell\mid 
\cA\colon\text{inductively free} 
\right\}. 
\\ 
\cR_{\ell}&=&\left\{\cA\in\cF_\ell\mid 
\cA\colon\text{recursively free} 
\right\}. 
\end{eqnarray*} 
Note that $\cI_{\ell}\subset\cR_{\ell}\subset\cF_{\ell}$ 
by definition. 
\end{defn} 
 
For the rest of this section, we concentrate our interest on 
a central arrangements in $\bC^3$. 
One of the main purposes of this paper is to clarify the difference 
between $\cI_{\ell}$ and $\cF_{\ell}$ for $0\leq\ell\leq12$. 
Since 
$\cI_{\ell}=\cF_{\ell}$ for $\ell\leq1$, 
we may assume $\ell\geq2$. 
For $H\in\cA$, we denote 
$ 
n_{\cA,H}=\left|\cA\cap H\right| 
$. 
The following is 
the foundation stone of our analysis in this paper. 
\begin{thm}[\cite{A}]\label{ABT} 
Assume $\chi(\cA,t)=(t-1)(t-a)(t-b)$ for 
$a,b\in\bR$ 
and there exists $H\in\cA$ such that $n_{\cA,H}>\min(a,b)$. 
Then, $\cA$ is free if and only if $n_{\cA,H}\in\{a+1,b+1\}$. 
\end{thm}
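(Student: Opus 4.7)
The plan is to reduce the statement to a question about the Ziegler restriction $(\cA^H, m^H)$ on $H\cong\bC^2$, where $\cA^H=\cA\cap H$ consists of $n:=n_{\cA,H}$ lines and $m^H(X)=\mu_X-1$ records how many planes of $\cA\setminus\{H\}$ intersect $H$ along each codimension-two line $X$. The total multiplicity is $|m^H|=|\cA|-1=a+b$, and since any multiarrangement in $\bC^2$ is free, $(\cA^H,m^H)$ has well-defined exponents $(e_1,e_2)$ with $e_1\le e_2$ and $e_1+e_2=a+b$. By the Yoshinaga criterion (which, given the assumed factorization of $\chi(\cA,t)$, couples freeness of $\cA$ to the Ziegler exponents), $\cA$ is free with $\exp(\cA)=(1,a,b)$ precisely when $(e_1,e_2)=(a,b)$; we may assume $a\le b$ without loss of generality.

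The heart of the argument is the following lemma on multiarrangements in the plane: if $(\cB,m)$ has $n$ distinct lines and exponents $(e_1,e_2)$ with $e_1\le e_2$ and $n>e_1$, then $n\in\{e_1+1,\,e_2+1\}$. Take a minimal generator $\theta\in D(\cB,m)$ of degree $e_1$. If some line $L^*\in\cB$ has $m(L^*)>e_1$, then $\theta(\alpha_{L^*})$, being a polynomial of degree $e_1<m(L^*)$ in $(\alpha_{L^*}^{m(L^*)})$, must vanish, so $\theta$ is parallel to $L^*$; at most one such $L^*$ exists (else $\theta=0$). Writing $\theta=p\cdot\vec v_{L^*}$ with $\deg p=e_1$ and applying $\theta(\alpha_L)\in(\alpha_L^{m(L)})$ for the remaining lines forces $\prod_{L\ne L^*}\alpha_L^{m(L)}\mid p$, which in turn yields $m(L^*)=e_2$, $\sum_{L\ne L^*}m(L)=e_1$, and hence $n=e_1+1$. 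In the complementary case (all $m(L)\le e_1$), the universal derivation $\theta_0=\bigl(\prod_L\alpha_L^{m(L)-1}\bigr)\theta_E\in D(\cB,m)$, of degree $1+(a+b)-n$, satisfies $\deg\theta_0\ge e_1$, giving $n\le e_2+1$; a finer parameter count in the graded pieces of $D(\cB,m)$, combined with Saito's criterion for multiarrangements and the second generator (of degree $e_2$), excludes intermediate $n$ and yields $n=e_2+1$.

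Given the key lemma, the forward direction of Theorem \ref{ABT} is immediate: $\cA$ free implies $(e_1,e_2)=(a,b)$, and the hypothesis $n>\min(a,b)=a=e_1$ forces $n\in\{a+1,b+1\}$. For the converse, $n\in\{a+1,b+1\}$ combined with the global combinatorial constraint $\sum_{X\not\subset H}(\mu_X-1)=ab$ (read from the coefficients of $\chi(\cA,t)$) rules out Ziegler configurations with $(e_1,e_2)\ne(a,b)$, whence Yoshinaga's criterion gives freeness; alternatively, one argues via the addition-deletion Theorem \ref{ad} together with induction on $|\cA|$, noting that $\chi(\cA\setminus\{H\},t)$ factors as $(t-1)(t-a)(t-b+1)$ or $(t-1)(t-a+1)(t-b)$ by deletion-restriction. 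The principal obstacle is the ``no dominant multiplicity'' branch of the key lemma: while $\theta_0$ gives the easy bound $n\le e_2+1$, proving equality requires a delicate dimension analysis in $D(\cB,m)$.
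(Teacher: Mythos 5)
The paper does not actually prove this statement; it is imported wholesale from \cite{A}, so there is no internal proof to compare against. Your overall strategy---pass to the Ziegler restriction $(\cA^H,m^H)$, invoke Yoshinaga's criterion (Proposition \ref{yoshinaga}, from \cite{Y}) to translate freeness of $\cA$ into $(e_1,e_2)=(a,b)$, and prove a dichotomy relating the number $n$ of lines of a $2$-multiarrangement to its exponents---is indeed the route of \cite{A}. But two steps have genuine gaps. First, in the key lemma your Case~B asserts that if every $m(L)\le e_1$ then $n=e_2+1$; this is false. Take four lines with multiplicities $(3,2,1,1)$, say $x^3y^2(x+y)(x+ty)$ with $t$ generic: a direct check gives $D_2=0$ and $\dim D_3=1$, so $(e_1,e_2)=(3,4)$, all multiplicities are $\le e_1=3$, yet $n=4=e_1+1\ne e_2+1=5$. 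So the dichotomy $n\in\{e_1+1,e_2+1\}$ does not align with your case split, and the ``finer parameter count'' you defer to cannot exist as described. The lemma itself is true, but the working argument is different: $D(\cB,m)\subset D(\cB)=S\theta_E\oplus S\psi$ with $\deg\psi=n-1$, so if $n\ge e_1+2$ the minimal generator has degree $e_1<n-1$ and must equal $f\theta_E$ with $\alpha_L^{m(L)-1}\mid f$ for every $L$; hence $e_1-1\ge |m|-n$, i.e.\ $n\ge e_2+1$, which together with your (correct) bound $n\le e_2+1$ forces $n=e_2+1$.

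Second, the converse when $n_{\cA,H}=a+1<b+1$ is not established. To apply Yoshinaga's criterion you need $e_1\le a$. The inequality that is actually available---the Abe--Yoshinaga bound $e_1e_2\ge ab$, which you never state but which is already indispensable in the case $n=b+1$ (there it combines with $e_2\ge n-1=b$ to force $e_2=b$)---gives $e_1\ge a$, the wrong direction here. Knowing $n=a+1$ only yields $e_2\ge a$, which is vacuous, and a $2$-multiarrangement can perfectly well satisfy $e_1\ge n$ (e.g.\ $x^2y^2$ has $n=2$ and exponents $(2,2)$), so nothing in your write-up excludes $e_1>a$; the vague appeal to ``the global combinatorial constraint'' does not produce the needed upper bound. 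Your fallback via Theorem \ref{ad} is circular: the addition theorem requires the freeness of $\cA\setminus\{H\}$, which an induction on this very statement cannot supply without locating a suitable line in the deletion. This case is the real content of \cite{A} and is missing from your proposal.
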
 
\begin{defn} 
In view of theorem \ref{ABT}, we define 
the subset $\cS_{\ell}$ of $\cF_{\ell}$ as 
$$\cS_{\ell}=\{\cA\in\cF_\ell 
\mid \exp(\cA)=(1,a,b),\ \max_{H\in\cA}n_{\cA,H}\leq\min(a,b)\}. 
$$ 
Note that $\cS_{\ell}\cap\cI_{\ell}=\emptyset$ by 
Theorem \ref{ad}. 
\end{defn} 
\begin{lem}\label{Red} 
If $\cF_{\ell-1}=\cI_{\ell-1}$ 
(resp.~$\cR_{\ell-1}$), 
then 
$\cF_{\ell}=\cS_{\ell}\sqcup \cI_{\ell}$ 
(resp.~$\cS_{\ell}\cup \cR_{\ell}$). 
\end{lem}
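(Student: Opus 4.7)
The plan is to let $\cA \in \cF_\ell$ with $\cA \notin \cS_\ell$, and argue that $\cA$ lies in $\cI_\ell$ (resp.\ $\cR_\ell$); combined with the trivial inclusion $\cS_\ell \subset \cF_\ell$ and the remark $\cS_\ell \cap \cI_\ell = \emptyset$ recorded just after the definition of $\cS_\ell$, this yields the lemma in both the inductive (disjoint union) and recursive cases.

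The first step is to locate a hyperplane in $\cA$ suitable for deletion. Write $\exp(\cA) = (1, a, b)$; since $\cA \notin \cS_\ell$, there exists $H \in \cA$ with $n_{\cA, H} > \min(a, b)$. Theorem \ref{factorization} gives $\chi(\cA, t) = (t-1)(t-a)(t-b)$, so Theorem \ref{ABT} applied to the free arrangement $\cA$ forces $n_{\cA, H} \in \{a+1, b+1\}$. Relabelling the two non-unit exponents as $(d_1, d_2)$ with $d_1$ chosen so that $n_{\cA, H} = 1 + d_1$, the deletion part of Theorem \ref{ad}(2) then yields that $\cA' := \cA \setminus \{H\}$ is free, and evidently $|\cA'| = \ell - 1$, so $\cA' \in \cF_{\ell-1}$.

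The second step is to invoke the hypothesis: $\cA' \in \cF_{\ell-1} = \cI_{\ell-1}$ (resp.\ $\cR_{\ell-1}$), so there is an inductive (resp.\ recursive) chain $\emptyset = \cB_0 \subset \cB_1 \subset \cdots \subset \cB_{\ell-1} = \cA'$ witnessing this. Extending the chain by $\cB_\ell := \cA = \cA' \cup \{H\}$, which is free by the standing assumption and satisfies $|\cB_\ell| - |\cB_{\ell-1}| = 1$, produces a chain fulfilling Definition \ref{IR}(1) (resp.\ (2)); hence $\cA \in \cI_\ell$ (resp.\ $\cR_\ell$).

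No step of the argument is deep; the only mild obstacle is that Theorem \ref{ad}(2) singles out one of the two non-unit exponents as $d_1$, so one must identify whichever of $a$ or $b$ equals $n_{\cA, H} - 1$ as $d_1$ before applying it. I expect this labelling sensitivity, dissolved immediately by Theorem \ref{ABT}, to be the only point requiring any care.
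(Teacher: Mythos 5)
Your proposal is correct and follows essentially the same route as the paper: use Theorem \ref{ABT} to find $H$ with $n_{\cA,H}\in\{a+1,b+1\}$, delete it via Theorem \ref{ad}(2) to land in $\cF_{\ell-1}$, and conclude from the hypothesis. You merely spell out the chain-extension and disjointness details that the paper dismisses as ``clear.''
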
 
\begin{proof} 
Take $\cA\in\cF_{\ell}\setminus\cS_{\ell}$ 
and set $\exp(\cA)=(1,a,b)$. 
By Theorem \ref{ABT}, 
$\cA\in\cF_{\ell}\setminus\cS_{\ell}$ 
if and only if there exists 
a line $H\in\cA$ such that $n_{\cA,H}=a+1$ or $b+1$. 
Therefore, by the deletion theorem, 
we have $\cA\setminus\{H\}\in \cF_{\ell-1}$. 
Now the assertions above are clear. 
\end{proof} 
 
In the rest of this paper, 
we regard a central arrangement in $\bC^3$ 
as a line arrangement in $\bP_{\bC}^2$. 
For a line arrangement $\cA$ in $\bP_{\bC}^2$ 
and $P\in\bP_{\bC}^2$, we set 
$$ 
\cA_P=\{H\in\cA\mid P\in H\},\ 
\mu_P(\cA)=\left|\cA_P\right|-1,\ 
\mu_{\cA}=\sum_{P\in\bP_{\bC}^2}\mu_P(\cA). 
$$ 
Note that $\mu_P(\cA)$ is the reformulation of 
M\"obius function for $L_2(\cA)$. 
If $\cA\neq\emptyset$, we can express $\chi({\cA},t)$ 
as follows by definition. 
$$\chi({\cA},t)=(t-1)\left\{ 
t^2-(|\cA|-1)(t+1) 
+\mu_{\cA}\right\}.$$ 
Concerning the set $\cS_{\ell}$, we have the following lemma. 
\begin{lem}\label{a-2} 
Let $\cA\in\cF_{\ell}$ with $\ell\geq2$ and 
$\exp(\cA)=(1,a,b)$. 
Assume $\mu_P(\cA)\geq\min(a,b)-1$ for some $P\in\bP_{\bC}^2$. 
Then we have $\cA\not\in\cS_{\ell}$.  In particular, 
$\cS_{\ell}=\emptyset$ for $2\leq\ell\leq6$. 
\end{lem}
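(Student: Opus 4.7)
The plan is to argue by contradiction, splitting on the value of $\mu_P(\cA)$. Since a pencil has exponents $(0,1,|\cA|-1)$, its $\min(a,b)$ is $0$ and no pencil with $\ell\geq2$ belongs to $\cS_\ell$; I may therefore assume $\cA$ is not a pencil, so $a,b\geq1$ and $\cA\setminus\cA_P\neq\emptyset$. Without loss of generality $a\leq b$, so the hypothesis reads $\mu_P\geq a-1$. In the easy subcase $\mu_P\geq a$, I pick any $H\in\cA\setminus\cA_P$: the $|\cA_P|\geq a+1$ lines through $P$ meet $H$ at pairwise distinct points (two distinct lines sharing $P$ cannot share a second point on $H\not\ni P$), so $n_{\cA,H}\geq a+1>a$ and $\cA\notin\cS_\ell$.

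The crucial case is $\mu_P=a-1$, i.e.\ $|\cA_P|=a$; I assume $\cA\in\cS_\ell$ and seek a contradiction. Writing $\cA_P=\{\ell_1,\dots,\ell_a\}$, the above argument gives $n_{\cA,H}\geq a$ for every $H\in\cA\setminus\cA_P$, and the $\cS_\ell$ bound forces $n_{\cA,H}=a$. Thus the only intersection points on $H$ are the $a$ distinct points $\ell_i\cap H$, so every line of $\cA\setminus(\cA_P\cup\{H\})$ meets $H$ on some $\ell_i$, hence passes through some $\ell_i$. Letting $H$ range over $\cA\setminus\cA_P$, any two lines of $\cA\setminus\cA_P$ intersect on some $\ell_i$; combined with $\ell_i\cap\ell_j=\{P\}$ for $i\neq j$, this shows that every intersection point of $\cA$ other than $P$ lies on exactly one $\ell_i$.

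The endgame is a double count of $\mu_\cA$. For each $i$ and each $R\in\ell_i\setminus\{P\}$ in $L_2(\cA)$, since $\ell_i$ is the unique line of $\cA_P$ through $R$, the value $\mu_R$ equals the number of lines of $\cA\setminus\cA_P$ through $R$; each such line meets $\ell_i$ exactly once off $P$, giving
\[
\sum_{R\in\ell_i,\ R\neq P}\mu_R\;=\;|\cA\setminus\cA_P|\;=\;1+b.
\]
Summing over $i=1,\dots,a$ (each $R\neq P$ being counted once, by the structural step) and adding $\mu_P=a-1$ yields $\mu_\cA=(a-1)+a(1+b)=ab+2a-1$. On the other hand, freeness with $\exp(\cA)=(1,a,b)$ and the identity $\chi(\cA,t)=(t-1)\{t^2-(|\cA|-1)(t+1)+\mu_\cA\}$ recalled before the lemma force $\mu_\cA=ab+a+b$. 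Equating the two expressions gives $b=a-1$, contradicting $a\leq b$; hence $\cA\notin\cS_\ell$.

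For the \emph{in particular} assertion, any $\cA\in\cS_\ell$ with $2\leq\ell\leq6$ is non-pencil with $a,b\geq1$ and $a+b\leq5$, so $a=\min(a,b)\leq2$; any two lines in $\bP_{\bC}^2$ meet, producing $P$ with $\mu_P\geq1\geq a-1$, and the main claim forces $\cA\notin\cS_\ell$, so $\cS_\ell=\emptyset$. I expect the main obstacle to be the threshold case $\mu_P=a-1$: no local bump of $n_{\cA,H}$ past $a$ is available, and the argument must instead exploit the rigidity that all intersections collapse onto $\bigcup_i\ell_i$ and match this against the global identity for $\mu_\cA$ coming from the characteristic polynomial factorization.
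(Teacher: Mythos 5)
Your proof is correct, and its skeleton matches the paper's: both arguments reduce to the two cases $\mu_P(\cA)\geq a$ and $\mu_P(\cA)=a-1$ (with $a=\min(a,b)$), dispose of the first by noting that any $H\in\cA\setminus\cA_P$ would meet the $\geq a+1$ lines through $P$ in $\geq a+1$ distinct points (the pencil case being trivially excluded), and in the threshold case both first establish that every intersection point of $\cA$ lies on $\bigcup_{H\in\cA_P}H$. Where you diverge is in how that rigidity is turned into a contradiction. The paper observes that this makes $P$ a modular point, so $\cA$ is supersolvable with $\exp(\cA)=(1,a-1,\ell-a)$, which is incompatible with $\exp(\cA)=(1,a,b)$ and $a\leq b$. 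You instead double-count $\mu_\cA$ along the $a$ lines through $P$, obtaining $\mu_\cA=(a-1)+a(b+1)=ab+2a-1$, and compare with $\mu_\cA=ab+\ell-1=ab+a+b$ coming from Theorem \ref{factorization}; both routes land on the same impossible conclusion $b=a-1<a$. Your variant is somewhat more self-contained, since it uses only the characteristic-polynomial identity already recorded in \S\ref{prel} rather than the exponent formula for supersolvable arrangements, at the cost of a slightly longer counting step; the paper's citation of supersolvability is shorter but imports an external fact. Your handling of the ``in particular'' clause agrees with the paper's.
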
 
\begin{proof} 
We assume $a\leq b$ and $\cA\in\cS_{\ell}$. 
Let $P_0$ be a point in $\bP_{\bC}^2$. 
 
Suppose $\mu_{P_0}(\cA)\geq a$. 
If there exists $H\in \cA\setminus\cA_{P_0}$, we have 
$n_{\cA,H}\geq|\cA_{P_0}\cap H|\geq a+1$, which contradicts 
to $\cA\in\cS_{\ell}$. Therefore $\cA_{P_0}=\cA$, and it 
follows that $\exp(\cA)=(1,0,\ell-1)$. 
Thus $a=0$, but there exists $H\in\cA$ with $n_{\cA,H}\geq1$ 
since $\ell\geq2$, which contradicts to $\cA\in\cS_{\ell}$. 
 
Suppose $\mu_{P_0}(A)=a-1$. 
Since $|\cA_{P_0}|=a$ and $\cA\in\cS_{\ell}$, 
all intersection points of $\cA$ 
lie on $\bigcup_{H\in\cA_{P_0}}H$. 
It follows that $\cA$ is super solvable 
(see Definition 2.32 of \cite{OT} for the details) 
and $\exp(\cA)=(1,a-1,\ell-a)$, 
which contradicts to the condition on $a$. 
 
If $\ell\leq6$, the condition for the lemma automatically 
holds since we have 
$\mu_P(\cA)\geq1$ for some $P\in\bP_{\bC}^2$ and 
$a\leq\lfloor(\ell-1)/2\rfloor\leq2$. 
Therefore we have $\cS_{\ell}=\emptyset$. 
\end{proof} 
 
Now we introduce the invariant $F(\cA)$, which will be used 
to classify $\cS_\ell$. 
\begin{defn} 
Let $\cA$ be a line arrangement in $\bP_{\bC}^2$. We denote 
$$M_i(\cA)=\{P\in{\bP_{\bC}}^2\mid \mu_P(\cA)=i\}$$ 
and set the invariant $F(\cA)$ as 
$$ 
F(\cA)=[F_1(\cA),F_2(\cA),\ldots],\quad 
F_i(\cA)=\left|M_i(\cA)\right|\ (i=1,2,\ldots). 
$$ 
\end{defn} 
\begin{lem} 
The invariant $F(\cA)$ satisfies the following formulae. 
$$ 
\sum_{i} 
iF_i(\cA)=\mu_\cA 
,\ 
\sum_{i} 
(i+1)F_i(\cA)=\sum_{H\in\cA}n_{\cA,H} 
,\ 
\sum_{i} 
\binom{i+1}{2}F_i(\cA)=\binom{|\cA|}{2} 
. 
$$ 
\end{lem}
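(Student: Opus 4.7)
The plan is to prove all three formulas by simple double-counting. The key observation is that for any $P\in M_i(\cA)$, by the definition of $\mu_P(\cA)$, we have $|\cA_P|=\mu_P(\cA)+1=i+1$. Hence the weights $i$, $i+1$, and $\binom{i+1}{2}$ appearing in the three sums admit natural interpretations as, respectively, the excess number of lines through $P$, the number of lines through $P$, and the number of unordered pairs of lines through $P$.

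For the first identity, I would simply regroup points by their $\mu$-value:
$$\sum_i iF_i(\cA)=\sum_i\sum_{P\in M_i(\cA)}i=\sum_{P\in L_2(\cA)}\mu_P(\cA)=\mu_{\cA},$$
using that $\mu_P(\cA)=0$ for $P\notin L_2(\cA)$ (i.e.\ $P$ lying on at most one line), so those contributions vanish.

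For the second identity, I would rewrite the left-hand side as a count of incidences. Since $i+1=|\cA_P|$ for $P\in M_i(\cA)$,
$$\sum_i(i+1)F_i(\cA)=\sum_{P\in L_2(\cA)}|\cA_P|=\bigl|\{(P,H)\mid P\in L_2(\cA),\ H\in\cA,\ P\in H\}\bigr|.$$
Swapping the order of summation (counting, for each $H$, the number of intersection points of $\cA$ lying on $H$) gives $\sum_{H\in\cA}n_{\cA,H}$, by the definition of $n_{\cA,H}$.

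For the third identity, I would interpret $\binom{i+1}{2}=\binom{|\cA_P|}{2}$ as the number of unordered pairs of distinct lines of $\cA$ passing through $P$. Thus
$$\sum_i\binom{i+1}{2}F_i(\cA)=\sum_{P\in L_2(\cA)}\binom{|\cA_P|}{2}=\bigl|\{(\{H_1,H_2\},P)\mid H_1\neq H_2,\ P=H_1\cap H_2\}\bigr|.$$
Since any two distinct lines in $\bP_{\bC}^2$ meet at a unique point, each unordered pair $\{H_1,H_2\}\subset\cA$ is counted exactly once, giving $\binom{|\cA|}{2}$. There is no real obstacle here; the content is entirely bookkeeping, and the only subtle point is the identification $|\cA_P|=i+1$ on $M_i(\cA)$, which is merely the definition of $\mu_P(\cA)$.
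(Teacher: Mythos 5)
Your proposal is correct and takes essentially the same route as the paper: the paper's proof is exactly this double-counting argument (the first identity by definition, the second from the fact that a point of $M_i(\cA)$ lies on $i+1$ lines, the third by viewing intersection points as collisions of pairs of lines), only stated more tersely. Your write-up merely makes the incidence counts explicit.
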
 
\begin{proof} 
The left equation is clear by definition. 
Since $P\in M_i(\cA)$ is contained in $(i+1)$ lines of $\cA$, 
the middle equation holds. 
Finally, regarding all the intersection points of $\cA$ as 
the concentrations of the intersections of 2 lines of $\cA$, 
we have the right equation. 
\end{proof} 
Now we determine all the possibilities of $F(\cA)$ 
for $\cA\in\cS_\ell$ $(\ell\leq12)$. 
\begin{prop}\label{Classify} 
Let $\ell\in\bZ_{\leq12}$ and 
$\cA\in\cS_{\ell}$ with $\exp(\cA)=(1,a,b)$. 
Then we have 
$$(\ell,\min(a,b),F(\cA))\in\left\{ 
\begin{array}{ccc} 
(9, 4, [0, 12]), 
& 
(11, 5,[1, 14, 2]), 
& 
(11, 5,[4, 11, 3]), 
\\ 
(11, 5,[7, 8, 4]), 
& 
(11, 5,[10, 5, 5]), 
& 
(12, 5,[0, 16, 3]) 
\end{array} 
\right\}. 
$$ 
In particular, we have 
$\cS_{\ell}=\emptyset$, 
$\cF_{\ell}=\cI_{\ell}$ for $2\leq\ell\leq 8$ 
and $\cS_{10}=\emptyset$. 
\end{prop}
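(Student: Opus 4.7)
The plan is to exhaust all admissible $F(\cA)$ from the three identities of the preceding lemma together with the factorization theorem. Fix $\cA\in\cS_\ell$ with $\exp(\cA)=(1,a,b)$ and $a\le b$, so that $b=\ell-1-a$. Comparing Theorem \ref{factorization} with the expression $\chi(\cA,t)=(t-1)\{t^2-(\ell-1)(t+1)+\mu_\cA\}$ given just before Lemma \ref{a-2} yields $\mu_\cA=ab+a+b$. Lemma \ref{a-2} forces $\mu_P(\cA)\le a-2$ for every $P$, hence $F_i(\cA)=0$ for all $i\ge a-1$. The $\cS_\ell$-condition $n_{\cA,H}\le a$ for every $H$, summed over $H$, translates via the middle identity of the preceding lemma into $\sum_i(i+1)F_i(\cA)\le\ell a$. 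Together with the two equations $\sum_i iF_i=\mu_\cA$ and $\sum_i\binom{i+1}{2}F_i=\binom{\ell}{2}$, this produces a finite linear system for each pair $(\ell,a)$.

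I would then run through $a\in\{1,2,3,4,5\}$, since $a\ge 6$ requires $\ell\ge 13$. For $a\le 2$ the support of $F(\cA)$ is empty, which is incompatible with $\binom{\ell}{2}\ne 0$; for $a=3$ the support reduces to $\{F_1\}$ and the system collapses to $4\ell-13=\ell(\ell-1)/2$, whose discriminant is negative, so no $\ell$ survives. For $a=4$ the system has two unknowns and two equations, uniquely determining $F_2=(\ell^2-11\ell+42)/2$ and $F_1=5\ell-21-2F_2$; integrality is automatic and nonnegativity singles out $\ell=9$, yielding $F=[0,12]$. For $a=5$ the three unknowns subject to two equations leave one degree of freedom; expressing $F_1,F_2$ as explicit affine functions of $F_3$ and imposing $F_1,F_2\ge 0$ together with the summed bound yields $F_3\in\{2,3,4,5\}$ when $\ell=11$ and $F_3=3$ when $\ell=12$, producing exactly the five remaining entries in the table.

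The ``in particular'' assertions follow at once from the resulting list: $\cS_\ell=\emptyset$ for $2\le\ell\le 8$ and for $\ell=10$ by the absence of admissible tuples there, and $\cF_\ell=\cI_\ell$ for $2\le\ell\le 8$ by induction from $\cF_1=\cI_1$ using the first half of Lemma \ref{Red}. The only step requiring genuine care is the use of the $\cS_\ell$-condition in both its pointwise form (via Lemma \ref{a-2}, killing the high-index $F_i$) and its summed form $\sum_i(i+1)F_i\le\ell\min(a,b)$. Without the latter, positivity alone would leave an interval of length $4$ for $F_3$ at $(\ell,a)=(11,5)$ and of length $5$ at $(12,5)$, giving spurious $F$-vectors such as $[13,2,6]$ that are ruled out only by the summed bound; so both halves of the $\cS_\ell$-condition must be invoked.
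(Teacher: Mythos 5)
Your proposal is correct and follows essentially the same route as the paper: both set up the two equalities $\sum_i iF_i=\mu_\cA=(\ell-1)(a+1)-a^2$ and $\sum_i\binom{i+1}{2}F_i=\binom{\ell}{2}$ together with the summed $\cS_\ell$-bound $\sum_i(i+1)F_i\le a\ell$, use Lemma \ref{a-2} to kill $F_i$ for $i\ge a-1$, and then exhaust the finitely many $(\ell,a)$; the paper simply states ``solving these inequalities'' where you carry out the case split on $a$ explicitly (and your arithmetic checks out). The concluding ``in particular'' claims are handled identically via Lemma \ref{Red}.
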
 
\begin{proof} 
Note that $b=\ell-1-a$. 
We may assume $a\leq(\ell-1)/2$. 
By Lemma \ref{a-2}, we may assume 
$\ell\geq7$ and $F_i=0$ for $i\geq a-1$. 
Since $\chi({\cA},t)=(t-1)(t-a)(t-b)$ 
by Theorem \ref{factorization}, we have 
$\mu_{\cA}=ab+\ell-1=(\ell-1)(a+1)-a^2$. 
Also, since $\cA\in\cS_{\ell}$, we have 
$\sum_{H\in\cA}n_{\cA,H}\leq a\ell$. 
Thus we have the inequalities as follows 
$$ 
\sum_{i=1}^{a-2}iF_i(\cA)=(\ell-1)(a+1)-a^2, 
\quad 
\sum_{i=1}^{a-2}(i+1)F_i(\cA)\leq a\ell, 
\quad 
\sum_{i=1}^{a-2} 
\binom{i+1}{2}F_i(\cA)=\binom{\ell}{2}. 
$$ 
Solving above inequalities under the condition 
$0\leq a\leq(\ell-1)/2$ and $7\leq\ell\leq12$, we obtain only 
6 triplets $[\ell,a,F]$ appearing in the right hand side 
of the statement. Now $\cS_\ell=\emptyset$ for $2\leq\ell\leq8$ 
and $\cS_{10}=\emptyset$ 
are clear.  By Lemma \ref{Red}, we have 
$\cF_\ell=\cI_\ell$ for $2\leq \ell\leq8$. 
\end{proof} 
\begin{defn} 
For $H\in\cA$ and $i\in\bZ_{>0}$, we set 
$\mu_{\cA,H}=\sum_{P\in H}\mu_P(\cA)$ and 
$$ 
M_{i}(H,\cA)=M_i(\cA)\cap H,\ 
F_{H,i}(\cA)=\left|M_{i}(H,\cA)\right|,\ 
F_H(\cA)=[F_{H,1}(\cA),F_{H,2}(\cA),\ldots]. 
$$ 
\end{defn} 
\begin{lem} 
For $H\in\cA$, the invariant $F_{H}(\cA)$ satisfies 
the following formulae. 
$$ 
\sum_iF_{H,i}(\cA)=n_{\cA,H},\ 
\sum_iiF_{H,i}(\cA)=\mu_{\cA,H}=|\cA|-1,\ 
\sum_{H\in\cA}F_{H,i}(\cA)=(i+1)F_i(\cA). 
$$ 
\end{lem}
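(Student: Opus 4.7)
My plan is to prove each of the three formulae by straightforward double-counting, using the definitions of $F_{H,i}(\cA)$, $M_i(\cA)$, $\mu_P(\cA)$, and $\mu_{\cA,H}$ already established. None of these identities requires any freeness or combinatorial machinery beyond what is built in; they are structural facts about the incidence of a line arrangement in $\bP_{\bC}^2$.

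For the first identity $\sum_i F_{H,i}(\cA)=n_{\cA,H}$, I would partition the set $H\cap\bigcup_{H'\in\cA\setminus\{H\}}H'$ of intersection points lying on $H$ according to the value of $\mu_P(\cA)$. Since a point $P\in H$ with $\mu_P(\cA)\geq1$ is precisely a point of $H\cap H'$ for some $H'\neq H$, and $\bigsqcup_{i\geq1}M_i(H,\cA)$ is exactly the set of such points, the sum equals $|\cA\cap H|=n_{\cA,H}$.

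For the middle identity, first $\sum_i iF_{H,i}(\cA)=\sum_{P\in H}\mu_P(\cA)=\mu_{\cA,H}$ follows immediately from the definition of $M_i(\cA)$ and the fact that points $P\in H$ with $\mu_P(\cA)=0$ contribute zero. The second equality $\mu_{\cA,H}=|\cA|-1$ I would prove as follows: for any $P\in H$ we have $H\in\cA_P$, hence $\mu_P(\cA)=|\cA_P|-1=|\cA_P\setminus\{H\}|$, so
\[
\mu_{\cA,H}=\sum_{P\in H}|\cA_P\setminus\{H\}|=\sum_{H'\in\cA\setminus\{H\}}|\{P\in H\mid P\in H'\}|=|\cA|-1,
\]
where the final equality uses that two distinct lines in $\bP_{\bC}^2$ meet in exactly one point.

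For the third identity $\sum_{H\in\cA}F_{H,i}(\cA)=(i+1)F_i(\cA)$, I would double-count the set of incidence pairs $\{(P,H)\mid P\in M_i(\cA),\ H\in\cA_P\}$. Summing first over $H\in\cA$ and then over points in $M_i(\cA)\cap H$ yields the left-hand side; summing first over $P\in M_i(\cA)$ and noting $|\cA_P|=\mu_P(\cA)+1=i+1$ yields the right-hand side. There is no serious obstacle: the only point that requires a moment of care is the use of the projective incidence property to convert the inner sum in the second formula into $|\cA|-1$; everything else is purely formal.
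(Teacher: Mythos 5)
Your proof is correct and follows essentially the same route as the paper, which simply declares all three formulae ``clear by definitions and the fact that $|\cA_P|=\mu_P(\cA)+1$''; you have merely written out the double-counting and the projective incidence argument that the authors leave implicit.
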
 
\begin{proof} 
The formulae above are clear by definitions and the fact that 
$|\cA_P|=\mu_P(\cA)+1$. 
\end{proof} 
\medskip 
 
In the following sections, 
we determine an arrangement $\cA\in\cS_{\ell}$ for $\ell\leq12$. 
Namely, we determine the lattice structures of $\cA$ 
up to the permutations ${\mathfrak S}_{\ell}$ of 
indices of hyperplanes and 
determine their realizations 
in $\bP_{\bC}^2$ up to the action of $\on{PGL}(3,\bC)$. 
 
The hyperplanes of $\cA$ are denoted by $\cA=\{H_1,\ldots,H_\ell\}$, 
while the defining equation of each $H_i$ is denoted by $h_i$. 
The intersection points of $\cA$ 
satisfying $\cA_P=\{H_{a_i}\mid i\in I\}$ 
is denoted by $\{a_i\mid i\in I\}$. 
The line passing through $P$ and $Q$ is 
denoted by $\overline{PQ}$. 
For the coordinate calculation, 
we regard $\bP_{\bC}^2$ as 
the union of affine part ${\bC}^2$ and 
the infinity line $H_{\infty}$. 
\end{section} 
\begin{section}{Determination of $\cS_9$}\label{S9} 
In this section, we show that $\cS_9$ consists of 
dual Hesse arrangements. 
\begin{subsection}{Lattice structure of $\cA\in\cS_9$}\label{S9L} 
We determine the lattice of $\cA\in\cS_9$. 
By Proposition \ref{Classify}, we have $F(\cA)=[0,12]$. 
Note that $F_{H}(\cA)=[0,4]$ for any $H\in\cA$ 
since $F(\cA)=[0,12]$ and $\sum_iiF_{H,i}(\cA)=\ell-1=8$. 
Concerning $M_2(H_9,\cA)$, we may set 
$$\{1,2,9\},\{3,4,9\},\{5,6,9\},\{7,8,9\} 
\in M_2(\cA).$$ 
Since $H_1\cap H_3$ lies on 
$H_5$, $H_6$, $H_7$ or $H_8$, 
we may set 
$\{1,3,5\}\in M_2(\cA)$ by symmetry. 
Since $H_1\cap H_7\neq H_1\cap H_8$, 
they are other 2 points of $M_2(H_1,\cA)$. 
Thus we may set 
$\{1,4,7\},\{1,6,8\}\in M_2(H_1,\cA)$ by symmetry 
of $(3,5)(4,6)$. Namely, 
$$\{1,3,5\},\{1,4,7\},\{1,6,8\}\in M_2(\cA).$$ 
Investigating 
$M_2(H_3,\cA)$, 
$M_2(H_4,\cA)$ and 
$M_2(H_2,\cA)$, 
it is easy to see 
$$ 
\{2,3,8\},\{3,6,7\}, 
\{2,4,6\},\{4,5,8\}, 
\{2,5,7\} 
\in M_2(\cA). 
$$ 
Now we obtain all the points of $M_2(\cA)$, thus 
the lattice structure of $\cA$ is determined. 
\end{subsection} 
\begin{subsection}{Realization of $\cA\in\cS_9$}\label{S9R} 
We determine the realization of $\cA\in\cS_9$ in $\bP_{\bC}^2$. 
We may set $H_9$ as the infinity line $H_\infty$, 
$h_1=x$, $h_2=x-1$, $h_3=y$, $h_4=y-1$ and 
$$ 
\{1,6,8\}=(0,p), 
\{2,5,7\}=(1,q), 
\{3,6,7\}=(r,0), 
\{4,5,8\}=(s,1) 
\quad 
(p,q,r,s\neq0,1). 
$$ 
Note that 
$(0,0),(1,q),(s,1)\in H_5$, 
$(1,1),(0,p),(r,0)\in H_6$, 
$(0,1),(1,q),(r,0)\in H_7$ and 
$(1,0),(0,p),(s,1)\in H_8$. 
Therefore we have 
$$sq=(1-r)(1-p)=r(1-q)=p(1-s)=1.$$ 
Solving these equations, we have 
$$(p,q,r,s)=(-\omega^2,-\omega,-\omega,-\omega^2),$$ 
where $\omega$ is a primitive third root of unity, 
and $h_i$ for $5\leq i\leq 8$ as follows. 
$$ 
h_5=y+\omega x,\ 
h_6=y+\omega x+\omega^2,\ 
h_7=y-\omega^2 x-1,\ 
h_8=y-\omega^2 x+\omega^2. 
$$ 
By this construction, for the permutation 
$\sigma\in{\mathfrak S}_{9}^\ast=\left\{\sigma\in{\mathfrak S}_{9} 
\mid \sigma(L(\cA))=L(\cA)\right\}$ preserving the 
lattice, there exists a $\on{GL}(3,\bC)$-action 
sending each $H_i$ to $H_{\sigma(i)}$, 
or sending each $H_i$ to $\overline{H_{\sigma(i)}}$, 
where $\overline{H_{i}}$ stands for the Galois conjugate 
of $H_i$ by $\on{Gal}(\bQ[\sqrt{-3}]/\bQ)$. 
Note also that $\cA$ is transferred to 
$\overline{\cA}$ 
by 
$\left[(x,y,z)\mapsto(y,x,z)\right]\in\on{GL}(3,\bC)$, 
which sends $H_i$ to $\overline{H_{\mu(i)}}$ 
where 
$\mu=(1,3)(2,4)(7,8)\in{\mathfrak S}_{9}^\ast$. 
Thus $\cA$ is realized uniquely up to the $\on{GL}(3,\bC)$-action. 
\end{subsection} 
\begin{subsection}{Verifying $\cA\in\cS_9\subset\cR_9$}\label{S9V} 
We check the freeness of $\cA$ realized in \S \ref{S9R} and 
show that $\cA\in\cR_9$. 
We set $\cA_1=\cA\cup\{H_{10}\}$ where $h_{10}=x-y$. 
Then we have 
$$\cA_1\cap H_{10}=\{ 
(0,0),\ 
(1,1),\ 
((1-\omega^2)^{-1},(1-\omega^2)^{-1}),\ 
((1-\omega)^{-1},(1-\omega)^{-1}),\ 
H_{10}\cap H_{\infty}\}. 
$$ 
Since $\mu_{\cA_1}=\mu_{\cA}+5$, we have 
$\chi({\cA_1},t)=(t-1)(t-4)(t-5)$. 
By Theorem \ref{ABT}, we have 
$\cA_1\in\cF_{10}$ with $\exp(\cA_1)=(1,4,5)$, 
and hence $\cA\in\cS_9$. 
We set $\cA_2=\cA_1\setminus\{H_{\infty}\}$ and 
$\cA_3=\cA_2\setminus\{H_7\}$. 
Since $n_{\cA_1,H_{\infty}}=5$, we have 
$\cA_2\in\cF_9$ with $\exp(\cA_2)=(1,4,4)$. 
Since $n_{\cA_2,H_7}=5$, we have 
$\cA_3\in\cF_8=\cI_8$. Therefore 
$\cA_2\in\cI_9$, $\cA_1\in\cI_{10}$ and 
$\cA\in\cR_9$. 
 
\medskip 
 
In fact, to check whether $\cA\in\cF_9$ belongs to $\cS_9$ or not, 
we have only to check $F(\cA)$. 
\begin{lem} 
If $\cA\in\cF_9$ satisfies $F(\cA)=[0,12]$, 
then $\cA\in\cS_9$. 
\end{lem}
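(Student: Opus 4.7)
The plan is a short direct computation from the definitions and formulas already collected in Section~\ref{prel}.

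First, I would read off the characteristic polynomial of $\cA$ from $F(\cA) = [0, 12]$. By the formula just before Lemma~\ref{a-2},
\[
\chi(\cA, t) = (t-1)\bigl\{t^2 - 8(t+1) + \mu_\cA\bigr\},
\]
and from $F(\cA) = [0,12]$ one gets $\mu_\cA = \sum_i i F_i(\cA) = 24$. Hence $\chi(\cA, t) = (t-1)(t^2 - 8t + 16) = (t-1)(t-4)^2$. Since $\cA$ is free with $|\cA| = 9$, Theorem~\ref{factorization} forces $\exp(\cA) = (1, 4, 4)$, so $\min(a, b) = 4$.

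Next, I would check that $n_{\cA, H} = 4$ for every $H \in \cA$. The hypothesis $F(\cA) = [0, 12]$ says every intersection point of $\cA$ is a triple point, so in particular $F_{H, i}(\cA) = 0$ for all $i \ne 2$ and all $H \in \cA$. Applying the formulas
\[
\sum_i F_{H, i}(\cA) = n_{\cA, H}, \qquad \sum_i i F_{H, i}(\cA) = |\cA| - 1 = 8
\]
to the single nonzero term $F_{H, 2}(\cA)$ gives $2 n_{\cA, H} = 8$, i.e.\ $n_{\cA, H} = 4$.

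Therefore $\max_{H \in \cA} n_{\cA, H} = 4 = \min(a, b)$, and the defining inequality for $\cS_9$ is satisfied, so $\cA \in \cS_9$. No step here is obstructed; the whole argument is essentially the observation that an all-triple-point free $9$-line arrangement is forced to have $\exp(\cA) = (1,4,4)$ and every line meeting exactly $4$ others.
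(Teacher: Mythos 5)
Your proof is correct and follows essentially the same route as the paper: the paper's one-line argument is exactly your observation that $8=\mu_{\cA,H}=2n_{\cA,H}$ forces $n_{\cA,H}=4$ for every $H$. You additionally spell out the exponent computation $\exp(\cA)=(1,4,4)$ via $\mu_\cA=24$ and Theorem~2.2, which the paper leaves implicit; this is a harmless (indeed helpful) elaboration, not a different approach.
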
 
\begin{proof} 
For any $H\in\cA$, 
since $9-1=\mu_{\cA,H}=2n_{\cA,H}$, we have 
$n_{\cA,H}=4$. 
\end{proof} 
\begin{defn}\label{dH} 
An arrangement in $\bC^3$ 
is called a {\it dual Hesse} arrangement 
if it is $\on{GL}(3,\bC)$-equivalent to 
$(\varphi_{\on{dH}}=0)$, where 
$$ 
\varphi_{\on{dH}}= 
(x^3-y^3)(y^3-z^3)(z^3-x^3). 
$$ 
\end{defn} 
It is easy to see that 
$\cA=(\varphi_{\on{dH}}=0)$ 
satisfies $F(\cA)=[0,12]$. 
Therefore, 
$$\cS_9=\{\text{dual Hesse arrangements}\}\subset\cR_9.$$ 
\end{subsection} 
\begin{subsection}{Addition to $\cA\in\cS_9$} 
The structures of $\cF_9$ and $\cF_{10}$ are given as below. 
\begin{prop}\label{9-10} 
\item{(1)} 
Let $H\in \cA_1\in\cF_{10}$ such that 
$\cA=\cA_1\setminus\{H\}\in\cS_9$. 
Then, $\cA_1\in\cI_{10}$ and $\cA_1$ is unique up to 
the $\on{GL}(3,\bC)$-action. 
\item{(2)} 
$\cF_{9}=\cR_{9}=\cI_9\sqcup\cS_9$ and 
$\cF_{10}=\cI_{10}$. 
\end{prop}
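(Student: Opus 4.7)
The plan is to handle part (1) by a combinatorial-plus-symmetry analysis of the dual Hesse configuration, and then obtain part (2) as a direct assembly of part (1) with Lemma \ref{Red} and Theorem \ref{ABT}.

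For part (1), I begin with the forcing from the addition theorem: since $\cA \in \cS_9$ has $\exp(\cA) = (1,4,4)$ and every point of $L_2(\cA)$ is a triple point (by \S \ref{S9L}), freeness of $\cA_1 = \cA \cup \{H\}$ requires $n_{\cA,H} = 5$, which via the identity $n_{\cA,H} = 9 - 2k$ (with $k$ the number of triple points of $\cA$ lying on $H$) pins down $k = 2$ and forces $H$ to meet the remaining lines of $\cA$ transversally. Next I argue uniqueness by establishing transitivity of the lattice-automorphism group $\mathfrak{S}_9^\ast$ — which lifts to $\on{PGL}(3,\bC)$ by \S \ref{S9R} — on the set of such external secants. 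In the coordinates of \S \ref{S9R} one checks that from the base triple point $(0,0) = \{1,3,5\}$ exactly two external secants emanate, namely $x = y$ (to $(1,1) = \{2,4,6\}$) and $y = \omega^2 x$ (to the infinity point $\{7,8,9\}$), each hitting only two triple points of $\cA$; an explicit element of the stabilizer of $(0,0)$ in $\mathfrak{S}_9^\ast$ exchanges these two secants, and combined with transitivity on triple points (manifest from the $(12_3, 9_4)$ symmetry of the dual Hesse) this yields a single orbit of $12$ external lines. Hence $\cA_1$ is unique up to $\on{GL}(3,\bC)$. Inductive freeness is then harvested from \S \ref{S9V}: the filtration $\cA_1 \supset \cA_1 \setminus \{H_\infty\} \supset \cA_1 \setminus \{H_\infty, H_7\}$ ends in $\cF_8 = \cI_8$, so two applications of the addition theorem yield $\cA_1 \in \cI_{10}$.

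Part (2) then falls out by assembly. Lemma \ref{Red} combined with $\cF_8 = \cI_8$ (Proposition \ref{Classify}) gives $\cF_9 = \cS_9 \sqcup \cI_9$, and \S \ref{S9V} together with the trivial $\cI_9 \subset \cR_9$ upgrades this to $\cF_9 = \cR_9$. For $\cF_{10} = \cI_{10}$: any $\cA_1 \in \cF_{10}$ lies outside $\cS_{10} = \emptyset$ (Proposition \ref{Classify}), so by Theorem \ref{ABT} there is $H \in \cA_1$ with $n_{\cA_1,H} \in \{a+1,b+1\}$ for $\exp(\cA_1) = (1,a,b)$; the deletion theorem gives $\cA_1 \setminus \{H\} \in \cF_9 = \cI_9 \sqcup \cS_9$, and both cases promote to $\cA_1 \in \cI_{10}$ — directly by the addition theorem if $\cA_1 \setminus \{H\} \in \cI_9$, or via part (1) if $\cA_1 \setminus \{H\} \in \cS_9$.

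The main obstacle is the transitivity claim in part (1): it requires a finite but careful verification that every external secant of the dual Hesse is equivalent, under a lattice-preserving linear automorphism, to the specific line $x = y$ used in \S \ref{S9V}, and in particular that no such secant accidentally passes through a third triple point. Once this combinatorial step is in hand, the remaining content is standard addition-deletion bookkeeping.
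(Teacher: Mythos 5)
Your proposal is correct and follows essentially the same route as the paper: the addition theorem forces $n_{\cA_1,H}=5$, hence $F_H(\cA_1)=[3,0,2]$ and $H$ must be one of the $12$ external secants joining two triple points; transitivity of $\mathfrak{S}_9^\ast$ (realized via $\on{GL}(3,\bC)$ and Galois conjugation) on these secants gives uniqueness; the explicit line $x=y$ from \S\ref{S9V} then yields $\cA_1\in\cI_{10}$, and part (2) is the same assembly via Lemma \ref{Red} and $\cS_{10}=\emptyset$. The only cosmetic difference is that you organize the transitivity check as ``point-transitivity plus a stabilizer element swapping the two secants at a base point,'' whereas the paper chains explicit permutations through all $12$ pairs; both reduce to the same finite verification in $\mathfrak{S}_9^\ast$.
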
 
\begin{proof} 
\item{(1)} 
We may assume $\cA$ has the description 
as in \S \ref{S9L} and \S \ref{S9R}. 
By Theorem \ref{ad}, we have $n_{\cA_1,H}=5$ and hence 
$F_{H}(\cA_1)=[3,0,2]$. 
Since $H\not\in\cA$, $H$ is one of the following. 
\begin{eqnarray*} 
&& 
\overline{\{1,2,9\}\{3,6,7\}}, 
\overline{\{1,2,9\}\{4,5,8\}}, 
\overline{\{1,3,5\}\{2,4,6\}}, 
\overline{\{1,3,5\}\{7,8,9\}}, 
\\ && 
\overline{\{1,4,7\}\{2,3,8\}}, 
\overline{\{1,4,7\}\{5,6,9\}}, 
\overline{\{1,6,8\}\{2,5,7\}}, 
\overline{\{1,6,8\}\{3,4,9\}}, 
\\ && 
\overline{\{2,3,8\}\{5,6,9\}}, 
\overline{\{2,4,6\}\{7,8,9\}}, 
\overline{\{2,5,7\}\{3,4,9\}}, 
\overline{\{3,6,7\}\{4,5,8\}}. 
\end{eqnarray*} 
Recall that any $\sigma\in{\mathfrak S}_9^\ast$ 
is realized by the action of $\on{GL}(3,\bC)$ and 
$\on{Gal}(\bQ[\sqrt{-3}]/\bQ)$. 
Therefore it suffices to show that 
${\mathfrak S}_9^\ast$ acts transitively 
on the pairs in the above list. 
 
Observe that each points 
of $M_2(\cA)$ lies on two candidates of $H$. 
We denote 
the ${\mathfrak S}_9^\ast$-equivalence 
by the symbol ``$\sim$''. 
First note that 
$(\{1,2,9\},\{3,6,7\})\sim (\{3,6,7\},\{1,2,9\})$ 
by $(1,3)(2,6)(7,9)\in{\mathfrak S}_9^\ast$ and 
$(\{1,2,9\},\{3,6,7\})\sim (\{1,2,9\},\{4,5,8\})$ 
by $(3,4)(5,7)(6,8)\in{\mathfrak S}_9^\ast$. 
As the point transferred from $\{1,2,9\}$ has the 
same property as above, it follows that 
$(\{1,2,9\},\{3,6,7\})\sim (\{3,6,7\},\{4,5,8\})$. 
Namely, we have 
$$ 
(\{1,2,9\},\{3,6,7\})\sim 
(\{1,2,9\},\{4,5,8\})\sim 
(\{3,6,7\},\{4,5,8\}). 
$$ 
By applying 
$(2,3)(4,7)(5,9), (2,7)(4,9)(6,8), 
(2,6)(3,5)(8,9)\in{\mathfrak S}_9^\ast$, 
We have 
\begin{eqnarray*} 
(\{1,2,9\},\{3,6,7\}) 
\sim (\{1,3,5\},\{2,4,6\}) 
\sim (\{1,4,7\},\{2,3,8\}) 
\sim (\{1,6,8\},\{2,5,7\}). 
\end{eqnarray*} 
Therefore we have the following, which completes the proof 
of the uniqueness of $\cA_1$. 
\begin{eqnarray*} 
(\{1,2,9\},\{3,6,7\})&\sim& 
(\{1,3,5\},\{2,4,6\})\sim 
(\{1,3,5\},\{7,8,9\})\sim 
(\{2,4,6\},\{7,8,9\}) 
\\ 
&\sim& 
(\{1,4,7\},\{2,3,8\})\sim 
(\{1,4,7\},\{5,6,9\})\sim 
(\{2,3,8\},\{5,6,9\}) 
\\ 
&\sim& 
(\{1,6,8\},\{2,5,7\})\sim 
(\{1,6,8\},\{3,4,9\})\sim 
(\{2,5,7\},\{3,4,9\}). 
\end{eqnarray*} 
Note that $H_{10}$ in \S \ref{S9V} is $\overline{\{1,3,5\}\{2,4,6\}}$ 
and $\cA\cup\{H_{10}\}\in\cI_{10}$.  By the uniqueness of 
$\cA_1$, we conclude that $\cA_1\in\cI_{10}$. 
Therefore (1) is verified. 
\item{(2)} 
Since $\cF_8=\cI_8$ and $\cS_9\subset\cR_9$, we have 
$\cF_9=\cI_9\sqcup\cS_9=\cR_9$ by Lemma \ref{Red}. 
Let $\cA\in\cF_{10}$.  Since $\cS_{10}=\emptyset$, there exists 
$H\in\cA$ such that $\cA'=\cA\setminus\{H\}\in\cF_9=\cI_9\sqcup\cS_9$. 
If $\cA'\in\cI_9$, then $\cA\in\cI_{10}$. 
If $\cA'\in\cS_9$, we also have $\cA\in\cI_{10}$ by (1). 
Therefore we have $\cF_{10}=\cI_{10}$. 
\end{proof} 
We remark that now Theorem \ref{Main} is 
established for $|\cA|\leq10$ 
by Propositions \ref{Classify} and \ref{9-10}. 
We give the proof of Corollary \ref{le4}. 
\begin{proof}[Proof of Corollary \ref{le4}] 
The proof is by the induction on $\ell=|\cA|$.  If $\ell\leq10$, we 
have nothing to prove. Assume that $\ell\geq11$. 
If $\cA\in\cF_{\ell}\setminus\cS_{\ell}$, then 
$H\in\cA$ such that $\cA'=\cA\setminus\{H\}\in\cF_{\ell-1}$. 
Since $\exp(\cA')=(1,a-1,b)$ or $(1,a,b-1)$, 
we have $\cA'\in\cI_{\ell-1}$ by induction hypothesis, 
and hence $\cA\in\cI_{\ell}$. Thus we may assume 
$\cA\in\cS_{\ell}$. We set $a\leq b$ and take $H\in\cA$. 
By Lemma \ref{a-2}, we have $\mu_P(\cA)\leq a-2$ for any $P\in H$. 
By definition of $S_{\ell}$, we have $n_{\cA,H}\leq a$. 
However it is a contradiction since we have the following 
inequalities. 
$$11-1\leq\ell-1=\mu_{\cA,H}\leq (a-2)a 
\leq(4-2)\cdot4=8. 
\eqno\qed$$ 
\renewcommand{\qed}{} 
\end{proof} 
\end{subsection} 
\end{section} 
\begin{section}{Determination of $\cS_{11}$}\label{S11} 
In this section, we show that $\cS_{11}$ consists of pentagonal 
arrangements. 
\begin{subsection}{Absence of $\cA\in\cS_{11}$ with 
$F(\cA)=[1,14,2]$} 
Let $\cA\in\cS_{11}$. 
By Proposition \ref{Classify}, we have 
$F(\cA)=[1,14,2]$, 
$[4,11,3]$, $[7,8,4]$ or $[10,5,5]$. 
 
Suppose $F(\cA)=[1,14,2]$. Take $P\in M_{3}(\cA)$. 
Since $|M_1(\cA)\cup M_{3}(\cA)\setminus\{P\}|=2$ 
and $|\cA_P|=4$, there exists 
$H\in\cA_P$ such that 
$M_1(H,\cA)=\emptyset$ and $M_3(H,\cA)=\{P\}$. 
Then it follows that 
$11-1=\mu_{\cA,H}=0+2F_{H,2}(\cA)+3\cdot1$, 
which is impossible. 
 
Therefore $F(\cA)=[4,11,3], [7,8,4]$ or $[10,5,5]$. 
In the following subsections, we 
show that only the case $F(\cA)=[10,5,5]$ 
occurs, which corresponds to the case when 
$\cA$ is a pentagonal arrangement. 
\end{subsection} 
\begin{subsection}{Subarrangement $\cA'$ of $\cA$} 
In the case $F(\cA)=[4,11,3]$ or $[7,8,4]$, 
we construct a subarrangement 
$\cA'=\{H_1,\ldots,H_{10}\}$ of $\cA$ satisfying the following. 
$$ 
F(\cA')=[9,6,3],\quad 
n_{\cA',H_i}= 
\begin{cases} 
4 & i=1 \\ \leq5 & 2\leq i\leq 10 
\end{cases},\quad 
M_3(\cA')=\left\{\begin{array}{ccc} 
H_1\cap H_2, \\ H_1\cap H_3, \\ H_2\cap H_3 
\end{array}\right\}. 
\eqno{(\ast)} 
$$ 
 
Suppose $F(\cA)=[4,11,3]$. 
Note that $n_{\cA,H}=4,5$ for any $H\in\cA$, since 
$3\cdot3<10=\mu_{\cA,H}$. 
Thus $n_{\cA,H}=4,5$ for any $H\in\cA$. Since 
$\sum_{H\in\cA}n_{\cA,H}=2\cdot4+3\cdot11+4\cdot3 
=5\cdot11-2$, we may set $n_{\cA,H_1}=n_{\cA,H_2}=4$ and 
$n_{\cA,H_i}=5$ for $3\leq i\leq 11$. 
Note that $F_{H_i,3}(\cA)\geq2$ for $i=1,2$, since 
$2\cdot3+3\cdot1<10=\mu_{\cA,H}$. 
Thus we may set $M_3(\cA)=\{P_1,P_2,P_3\}$, 
$H_1=\overline{P_2P_3}$ and $H_2=\overline{P_1P_3}$. 
Since $n_{\cA,H_1}=4=\cA_{P_1}\cap H_1$, we have 
$\overline{P_1P_2}\in\cA$, which we set $H_3$. 
Since $|\bigcup_{P\in M_3(\cA)}\cA_{P}|=4\cdot3-3=9<11$, 
we may set $H_{11}\cap M_3(\cA)=\emptyset$. 
Then $F_{\cA,H_{11}}=[0,5,0]$. 
Now it is easy to check that 
$\cA'=\cA\setminus\{H_{11}\}$ 
satisfies the condition $(\ast)$. 
 
Suppose $F(\cA)=[7,8,4]$. 
Since $\sum_{H\in\cA}n_{\cA,H}=2\cdot7+3\cdot8+4\cdot4=5\cdot11-1$, 
we may assume $n_{\cA,H_1}=4$ and $n_{\cA,H_i}=5$ for $2\leq i\leq 11$. 
Note that $F_{H_1,3}(\cA)\neq1,4$ since 
$2\cdot3+3\cdot1<\mu_{\cA,H_1}=10<3\cdot4$. 
Thus $F_{H_1,3}(\cA)=2, 3$.  We set 
$M_3(\cA)=\{P_1, P_2, P_3, P_4\}$ so that 
$P_1\not\in H_1=\overline{P_2P_3}$. 
Since $|\cA_{P_1}\cap H_1|=4=n_{\cA,H_1}$, 
we have $P_2,P_3\in\cA_{P_1}$. Therefore 
we may set $H_2=\overline{P_1P_3}$ and 
$H_3=\overline{P_1P_2}$. 
Since $|\cA_{P_4}|=4$, we may set 
$M_3(\cA)\cap H_{11}=\{P_4\}$. 
Since $n_{\cA,H_{11}}=5$ and $F_{H_{11},3}(\cA)=1$, we have 
$F_{H_{11}}(\cA)=[1,3,1]$. 
Now it is easy to check that 
$\cA'=\cA\setminus\{H_{11}\}$ 
satisfies the condition $(\ast)$. 
\end{subsection} 
\begin{subsection}{Lattice structure of $\cA'$} 
First we determine $F_{H_i}(\cA')$ for $1\leq i\leq 10$. 
We have 
$n_{\cA',H}=4,5$ for any $H\in\cA'$ 
since $|M_3(H,\cA')|\leq2$ and 
$2+3\cdot2<9=\mu_{\cA',H}$. 
Since $n_{\cA',H_1}=4$ and $F_{3,H_1}(\cA')=2$, 
we have  $F_{H_1}(\cA')=[1,1,2]$. 
Since $\bigcup_{P\in M_3(\cA')}\cA_P'=4\cdot3-3=9$, 
we may set $M_{3}(H_{10},\cA')=\emptyset$, 
and hence $F_{H_{10}}=[1,4,0]$. 
Since $H_2\cap H_{10}\neq H_3\cap H_{10}$, 
we may set $H_2\cap H_{10}\in M_2(\cA')$. 
Since $F_{H_2,3}(\cA')=2$, we have 
$F_{H_2}(\cA')=[1,1,2]$. 
Since $\sum_{H\in\cA'}n_{\cA',H}=2\cdot9+3\cdot6+4\cdot3=5\cdot10-2$, 
we have $n_{\cA',H_i}=4$ for $i=1,2$ and 
$n_{\cA',H_i}=5$ for $3\leq i\leq 10$. 
We have $F_{H_3}(\cA')=[3,0,2]$ and 
$F_{H_i}(\cA')=[2,2,1]$ for $4\leq i\leq 9$ 
since $F_{H_3,3}(\cA')=2$ and $F_{H_i,3}(\cA')=1$. 
As a conclusion, we have the following. 
$$ 
F_{H_i}(\cA')= 
[1,1,2]\ (i=1,2),\ 
[3,0,2]\ (i=3),\ 
[2,2,1]\ (4\leq i\leq 9),\ 
[1,4,0]\ (i=10). 
$$ 
 
Now we determine the lattice structure of $\cA'$. We may set 
$$ 
M_3(\cA')=\{ 
P_1=\{2,3,4,5\}, 
P_2=\{1,3,6,7\}, 
P_3=\{1,2,8,9\} 
\}. 
$$ 
Note that 
$\{3,8\},\{3,9\},\{3,10\}\in M_1(\cA')$ 
since $F_{H_3}(\cA')=[3,0,2]$. 
By symmetry of $(4,5)$ or $(6,7)$, 
we may set 
$\{1,4,10\}, \{2,6,10\}\in M_2(\cA')$. 
Since $H_8\cap H_{10}$ lies on $H_5$ or $H_7$, 
we may set $\{5,8,10\}\in M_2(\cA')$ 
by symmetry of $(1,2)(4,6)(5,7)$. 
We also have $\{7,9,10\}\in M_2(\cA')$. 
Since 
$\{\{7,9,10\}\}\cup\left(\cA'_{P_1}\cap H_7\right)$ 
defines all intersection points on $H_7$, 
we have $H_7\cap H_8\in M_2(\cA)$. 
By the same reasoning for 
$\{\{5,8,10\}\}\cup\left(\cA'_{P_2}\cap H_8\right)$ 
on $H_8$, we have $H_4\cap H_8\in M_2(\cA)$. 
Thus we have $M_2(H_8,\cA')=\{\{5,8,10\},\{4,7,8\}\}$. 
Since 
$F_{H_i,2}(\cA')=2$ for $i=5,6,9$, 
The last point of $M_2(\cA)$ is $\{5,6,9\}$. 
Therefore $M_2(\cA')$, and hence $M_1(\cA')$, are as follows, 
which determine the lattice of $\cA'$. 
\begin{eqnarray*} 
M_2(\cA')&=&\left\{ 
\{1,4,10\},\ 
\{2,6,10\},\ 
\{4,7,8\},\ 
\{5,6,9\},\ 
\{5,8,10\},\ 
\{7,9,10\} 
\right\}, 
\\ 
M_1(\cA')&=&\left\{ 
\{1,5\},\ 
\{2,7\},\ 
\{3,8\},\ 
\{3,9\},\ 
\{3,10\},\ 
\{4,6\},\ 
\{4,9\},\ 
\{5,7\},\ 
\{6,8\} 
\right\}. 
\end{eqnarray*} 
\end{subsection} 
\begin{subsection}{Realization of $\cA'$} 
We determine the realization of $\cA'$ in $\bP_{\bC}^2$. 
We may set $H_{10}$ as the infinity line $H_{\infty}$, 
$P_1=(1,0)$, $P_2=(0,1)$ and $P_3=(0,0)$. 
Then 
$$ 
h_1=x,\ 
h_2=y,\ 
h_3=x+y-1,\ 
h_4=x-1,\ 
h_6=y-1. 
$$ 
Set $\{4,7,8\}=(1,p)$ and $\{5,6,9\}=(q,1)$ 
($p,q\neq0$). 
Then we have 
$$ 
h_5=x-(q-1)y-1 
,\ 
h_7=(p-1)x-y+1 
,\ 
h_8=px-y 
,\ 
h_9=x-qy. 
$$ 
Since 
$H_5\parallel H_8$ and 
$H_7\parallel H_9$, 
we have 
$p(q-1)=(p-1)q=1$. 
Therefore we conclude that 
$p=q=\zeta$ where $\zeta$ is a solution of 
$\zeta^2-\zeta-1=0$, and we may reset the equations as 
$$ 
h_5=\zeta x-y-\zeta 
,\ 
h_7=x-\zeta y+\zeta 
,\ 
h_8=\zeta x-y 
,\ 
h_9=x-\zeta y. 
$$ 
\end{subsection} 
\begin{subsection}{Absence of $\cA\in\cS_{11}$ 
with $F(\cA)=[4,11,3]$ or $[7,8,4]$} 
We show that we cannot extend the realization of $\cA'$ 
obtained above to $\cA$. 
Assume that $\cA=\cA'\cup\{H_{11}\}$ is realizable. 
 
Suppose $F(\cA)=[4,11,3]$. 
Recall that $F_{H_{11}}(\cA)=[0,5,0]$. 
Since $|M_1(\cA')\cap H_{11}|=5$ 
and 
$M_1(\cA')\subset 
\left\{\{1,5\},\{4,9\}\right\} 
\cup \bigcup_{i=3,6,7}H_i$, 
we have 
$H_{11}=\overline{\{1,5\}\{4,9\}} 
=\overline{(0,-\zeta)(1,\zeta^{-1})}$ and 
$h_{11}=(1-2\zeta)x+y+\zeta$. 
Therefore $H_{10}\cap H_{11}\in M_1(\cA)$, 
a contradiction. 
 
Suppose $F(\cA)=[7,8,4]$. 
Recall that $F_{H_{11}}(\cA)=[1,3,1]$. 
Thus $M_1(H_{11},\cA)=\{H_i\cap H_{11}\}$ for some $1\leq i\leq 10$. 
Since 
$n_{\cA',H_i}+1=n_{\cA,H_i}\leq5$, 
we have $i=1$ or $2$. We may set $H_{1}\cap H_{11}\in M_1(\cA)$ 
by the symmetry of the coordinates $x$ and $y$. Note that 
$|M_1(\cA')\cap H_{11}|=3$ and $|M_2(\cA')\cap H_{11}|=1$. 
In particular, $H_2\cap H_{11}=\{2,7\}$ or $\{2,6,10\}$. 
 
Assume that $H_2\cap H_{11}=\{2,7\}=(-\zeta,0)$. 
Then $M_2(\cA')\cap H_{11}=\{\{5,6,9\}\}$ or $\{\{5,8,10\}\}$. 
If $H_{11}\ni\{5,6,9\}=(\zeta,1)$, we have $h_{11}=x-2\zeta y+\zeta$. 
Therefore $H_{10}\cap H_{11}\in M_1(\cA)$, a contradiction. 
If $H_{11}\ni\{5,8,10\}$, 
we have $H_{11}\cap M_1(\cA')=\left\{ 
\{2,7\},\{3,9\},\{4,6\}\right\}$. 
Since $\{4,6\}=(1,1)$, 
we have $h_{11}=x-(\zeta+1)y+\zeta$, 
which contradicts to $H_{11}\parallel H_{8}$. 
 
Assume that $H_2\cap H_{11}=\{2,6,10\}$. Then 
we have 
$M_1(\cA')\cap H_{11}= 
\left\{\{3,8\},\{4,9\},\{5,7\}\right\}$. 
Since 
$\{4,9\}=(1,\zeta-1)$ 
and 
$\{5,7\}=(\zeta+1,\zeta+1)$ 
we have $H_{11}\not\parallel H_{2}$, 
a contradiction. 
 
\medskip 
 
Now we may assume that $F(\cA)=[10,5,5]$. 
\end{subsection} 
\begin{subsection}{Lattice structure of $\cA\in\cS_{11}$}\label{S11L} 
We determine the lattice of $\cA\in\cS_{11}$. 
First we show that 
$\overline{PQ}\in\cA$ for any $P,Q\in M_3(\cA)$, $P\neq Q$. 
Assume that there exist $P,Q\in M_3(\cA)$ such that 
$\overline{PQ}\not\in\cA$. Note that 
$\cA$ has $10+5+5=20$ intersection points, 
and $\cA_P\cup\cA_Q$ covers $4\cdot4+2=18$ of them. 
We set the left 2 intersection points in 
$\cA\setminus(\cA_P\cup\cA_Q)$ as $T_1$ and $T_2$. 
If $H=\overline{T_1T_2}\in\cA$, then 
$\{T_1,T_2\}\cap(\cA_P\cap H)\neq\emptyset$ 
since $|\cA_P\cap H|=4$ and $n_{\cA,H}\leq5$. 
However, it contradicts to the choice of $T_i$. 
If $\overline{T_1T_2}\not\in\cA$, then 
$(\cA_{P}\cup\cA_{Q})\cap(\cA_{T_1}\cup\cA_{T_2}) 
\neq\emptyset$ since $|\cA|=11$, 
$|\cA_{P}\cup\cA_{Q}|=8$ and 
$|\cA_{T_1}\cup\cA_{T_2}|\geq4$. 
It also contradicts to the choice of $T_i$. 
 
Next we determine $F_{H_i}(\cA)$ for $1\leq i\leq 10$. 
Note that $n_{\cA,H}=5$ for $H\in\cA$ since 
$\sum_{H\in\cA}n_{\cA,H}=2\cdot10+3\cdot5+4\cdot5=5\cdot11$. 
We also have $F_{H,3}(\cA)\leq2$ for $H\in\cA$ since 
$\mu_{\cA,H}=10<1\cdot2+3\cdot3$. 
It follows that, for $P,Q\in M_3(\cA)$ with 
$P\neq Q$, $\overline{PQ}\in\cA$ are distinct each other, forming 
$\binom{5}{2}=10$ lines of $\cA$. 
Thus we may assume $F_{H_i,3}(\cA)=2$, i.e., 
$F_{H_i}(\cA)=[2,1,2]$, for $1\leq i\leq 10$. 
Since $4\cdot5=\sum_{H\in\cA}F_{H,3}(\cA) 
=2\cdot10+F_{H_{11},3}(\cA)$, 
we have $F_{H_{11},3}(\cA)=0$, i.e., 
$F_{H_{11}}(\cA)=[0,5,0]$. 
Therefore, we have 
$$F_{H_i}(\cA)=[2,1,2]\ (1\leq i\leq 10), 
\quad F_{H_{11}}(\cA)=[0,5,0].$$ 
 
We investigate the lattice structure of $\cA$. 
We may set $M_3(\cA)=\{P_i\mid 1\leq i\leq5\}$ and 
\begin{eqnarray*} 
&& 
H_1=\overline{P_1P_2},\ 
H_2=\overline{P_1P_3},\ 
H_3=\overline{P_1P_4},\ 
H_4=\overline{P_1P_5},\ 
H_5=\overline{P_2P_3},\ 
\\ && 
H_6=\overline{P_2P_4},\ 
H_7=\overline{P_2P_5},\ 
H_8=\overline{P_3P_4},\ 
H_9=\overline{P_3P_5},\ 
H_{10}=\overline{P_4P_5}, 
\end{eqnarray*} 
or, in other words, $M_3(\cA)$ consists of 
the following five points. 
$$ 
P_1=\{1,2,3,4\},\ 
P_2=\{1,5,6,7\},\ 
P_3=\{2,5,8,9\},\ 
P_4=\{3,6,8,10\},\ 
P_5=\{4,7,9,10\}. 
$$ 
Since $H_1\cap H_{11}\in M_2(\cA)$ lies on $H_8$, $H_9$ or $H_{10}$, 
we may set $\{1,9,11\}\in M_2(\cA)$ by symmetry. 
Since $H_3\cap H_{11}\in M_2(\cA)$ lies on $H_5$ or $H_7$, 
we may set $\{3,5,11\}\in M_2(\cA)$ by symmetry of 
$(2,4)(5,7)(8,10)$. Investigating 
$H_{10}\cap H_{11}, H_{6}\cap H_{11}, H_{8}\cap H_{11} \in M_2(\cA)$ 
in this order, we have 
$\{2,10,11\},\{4,6,11\},\{7,8,11\}\in M_2(\cA)$. 
Thus $M_2(\cA)$ is determined. 
$$ 
M_2(\cA)=\left\{ 
\{1,9,11\},\ 
\{2,10,11\},\ 
\{3,5,11\},\ 
\{4,6,11\},\ 
\{7,8,11\} 
\right\}. 
$$ 
Now $M_2(\cA)$ and $M_3(\cA)$ are determined, which 
gives the lattice structure of $\cA$. 
\end{subsection} 
\begin{subsection}{Realization of $\cA\in\cS_{11}$}\label{S11R} 
We determine the realization of $\cA\in\cS_{11}$ 
in $\bP_{\bC}^2$. 
We may set $H_{11}$ as the infinity line $H_{\infty}$, 
$P_1=(0,1)$, $P_2=(0,0)$ and $P_3=(1,0)$. 
By definition of $H_1$, $H_2$, $H_5$ and 
the fact that $P_3\in H_9\parallel H_1$ and 
$P_1\in H_3\parallel H_5$ imply that 
$$h_1=x,\ h_2=x+y-1,\ h_3=y-1,\ h_5=y,\ h_9=x-1.$$ 
We set $P_4=(p,1)$ and $P_5=(1,q)$. 
Since $H_{10}=\overline{P_4P_5}\parallel H_2$ and 
$H_{4}=\overline{P_1P_5}\parallel 
H_{6}=\overline{P_2P_4}$, 
we have $p=q$ and $p(q-1)=1$. 
Thus we have $p=q=\zeta$ where $\zeta$ is a solution of 
$\zeta^2-\zeta-1=0$. 
The left defining equations $h_i$ of $H_i$ are as follows. 
$$ 
h_4=x-\zeta y+\zeta,\ 
h_6=x-\zeta y,\ 
h_7=\zeta x-y,\ 
h_8=\zeta x-y-\zeta,\ 
h_{10}=x+y-\zeta-1. 
$$ 
By this construction, for the permutation 
$\sigma\in{\mathfrak S}_{11}^{\ast} 
=\left\{\sigma\in{\mathfrak S}_{11} 
\mid \sigma(L(\cA))=L(\cA)\right\}$, 
there exists a $\on{GL}(3,\bC)$-action 
sending each $H_i$ to $H_{\sigma(i)}$, 
or sending each $H_i$ to $\overline{H_{\sigma(i)}}$, 
where $\overline{H_{i}}$ stands for the Galois conjugate 
of $H_i$ by $\on{Gal}(\bQ[\sqrt{5}]/\bQ)$. 
Note also that $\cA$ is transferred to $\overline{\cA}$ 
by 
$\left[(x,y,z)\mapsto(\zeta x+y,x+\zeta y,z) 
\right]\in\on{GL}(3,\bC)$, 
which sends $H_i$ to $\overline{H_{\nu(i)}}$ 
where 
$\nu=(1,6,5,7)(2,10)(3,8,9,4)\in{\mathfrak S}_{11}^\ast$. 
Thus $\cA$ is realized uniquely up to the $\on{GL}(3,\bC)$-action. 
\end{subsection} 
\begin{subsection}{Verifying $\cA\in\cS_{11}\subset\cR_{11}$}\label{S11V} 
We check the freeness of $\cA$ realized in \S \ref{S11R} and 
show that $\cA\in\cR_{11}$. 
We set $\cA_1=\cA\cup\{H_{12}\}$ where 
$h_{12}=x-y$. Then we have 
$$\cA_1\cap H_{12}=\left\{ 
(0, 0),\ 
(1, 1),\ 
\left(\frac12, \frac12\right),\ 
(\zeta+1, \zeta+1),\ 
\left(\frac{\zeta+1}{2},\frac{\zeta+1}{2}\right),\ 
H_{12}\cap H_{\infty} 
\right\}.$$ 
Since $\mu_{\cA_1}=\mu_{\cA}+6$, we have 
$\chi({\cA_1},t)=(t-1)(t-5)(t-6)$. 
Thus $\cA_1\in\cF_{12}$ with $\exp(\cA_1)=(1,5,6)$ 
by Theorem \ref{ABT}, 
and hence $\cA\in\cS_{11}$. 
We set $\cA_2=\cA_1\setminus\{H_{\infty}\}$ and 
$\cA_3=\cA_2\setminus\{H_2\}$. 
Since $n_{\cA_1,H_{\infty}}=6$, we have 
$\cA_2\in\cF_{11}$ with $\exp(\cA_2)=(1,5,5)$. 
Since $n_{\cA_2,H_2}=6$, we have 
$\cA_3\in\cF_{10}=\cI_{10}$. 
Therefore, 
$\cA_2\in\cI_{11}$, $\cA_1\in\cI_{12}$ 
and 
$\cA\in\cR_{11}$. 
\begin{rem} 
Note that $\cA\in\cF_{11}$ satisfying $F(\cA)=[10,5,5]$ 
does not necessary belong to $\cS_{11}$.  For example, 
the arrangement $\cA$ defined by the equation 
$xyz(x^2-z^2)(y^2-z^2)$ 
$(x^2-y^2)(x-y+z)(x-y+2z)$ 
satisfies $F(\cA)=[10,5,5]$ 
but $\cA\in\cI_{11}$. 
\end{rem} 
\begin{defn}\label{Pen} 
The arrangement of Example 4.59 in \cite{OT} 
is the cone of the line arrangement consisted of 
5 sides and 5 diagonals of a regular pentagon, 
defined by the equation 
\begin{eqnarray*} 
\varphi_{\on{pen}}&=& 
z\ (4x^2+2x-z) 
\\&& 
(x^4-10x^2y^2+5y^4+6x^3z-10xy^2z+11x^2z^2-5y^2z^2+6xz^3+z^4) 
\\&& 
(x^4-10x^2y^2+5y^4-4x^3z+20xy^2z+6x^2z^2-10y^2z^2-4xz^3+z^4). 
\end{eqnarray*} 
An arrangement in $\bC^3$ 
is called {\it pentagonal} 
if it is $\on{GL}(3,\bC)$-equivalent to 
$(\varphi_{\on{pen}}=0)$. 
\end{defn} 
It is easy to see that 
$\cA=(\varphi_{\on{pen}}=0)$ satisfies 
$\cA\in\cS_{11}$ and $F(\cA)=[10,5,5]$.  Therefore, 
$$\cS_{11}=\{\text{pentagonal arrangements}\}\subset\cR_{11}.$$ 
By the description in \S \ref{S11R}, the lattice of 
a pentagonal arrangement is realized over $\bQ[\sqrt{5}]$. 
\end{subsection} 
\begin{subsection}{Addition to $\cA\in\cS_{11}$} 
The structures of $\cF_{11}$ and $\cF_{12}$ are given as below. 
\begin{prop}\label{11-12} 
\item{(1)} 
Let $H\in \cA_1\in\cF_{12}$ such that 
$\cA=\cA_1\setminus\{H\}\in\cS_{11}$. 
Then, $\cA_1\in\cI_{12}$ and $\cA_1$ 
has two possibilities 
up to the $\on{GL}(3,\bC)$-action. 
\item{(2)} 
$\cF_{11}=\cR_{11}=\cI_{11}\sqcup\cS_{11}$ and 
$\cF_{12}=\cI_{12}\sqcup\cS_{12}$. 
\end{prop}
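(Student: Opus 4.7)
The plan for (1) is to mimic the strategy of Proposition~\ref{9-10}(1). Fix $\cA\in\cS_{11}$ in the explicit pentagonal realization of \S\ref{S11L} and \S\ref{S11R}. By the addition theorem (Theorem~\ref{ad}), the desired exponents $\exp(\cA_1)=(1,5,6)$ force $n_{\cA_1,H}=6$, so $H$ meets the eleven lines of $\cA$ in exactly six distinct points. Writing $k_i=|M_i(\cA)\cap H|$ for $i=1,2,3$ and $k_0$ for the number of simple new intersections on $H$, the equations $k_3+k_2+k_1+k_0=6$ and $4k_3+3k_2+2k_1+k_0=11$ give
\[
3k_3+2k_2+k_1=5.
\]
Recalling from \S\ref{S11L} that all five triple points of $\cA$ lie on $H_{11}$, any line containing two of them equals $H_{11}\in\cA$; this eliminates $k_2\geq2$ and leaves the four profiles $(k_3,k_2,k_1,k_0)\in\{(0,0,5,1),(0,1,3,2),(1,0,2,3),(1,1,0,4)\}$.

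Next, enumerate the candidate lines of each profile in the coordinates of \S\ref{S11R} and group them into orbits under $\mathfrak{S}_{11}^{\ast}$ together with the Galois twist $\nu=(1,6,5,7)(2,10)(3,8,9,4)$ described there. The dihedral $D_5$-symmetry permuting the quadruple points $P_1,\ldots,P_5$ drastically reduces the enumeration in each profile, and $\nu$ identifies further representatives. One orbit is represented by $H_{12}\colon x-y=0$ from \S\ref{S11V} (profile $(1,0,2,3)$, passing through $P_2$ and the two doubles $(1,1),(\zeta^2,\zeta^2)$), which was already shown to yield $\cA_1\in\cI_{12}$. The remaining candidates, after symmetry reduction, collapse to a single second orbit; pick an explicit representative and verify inductive freeness by repeating the blueprint of \S\ref{S11V}, namely, add a further line so that Theorem~\ref{ABT} applies to the result, then peel off lines via the deletion theorem to reach some arrangement in $\cF_{10}=\cI_{10}$, proving $\cA_1\in\cI_{12}$.

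For part (2), combine (1) with the equality $\cF_{10}=\cR_{10}=\cI_{10}$ from Proposition~\ref{9-10} via Lemma~\ref{Red}. The lemma gives $\cF_{11}=\cS_{11}\cup\cR_{11}$, and \S\ref{S11V} proved $\cS_{11}\subseteq\cR_{11}$, so $\cF_{11}=\cR_{11}$. Moreover, any $\cA\in\cF_{11}\setminus\cS_{11}$ admits a deletion $\cA\setminus\{H\}\in\cF_{10}=\cI_{10}$, hence $\cA\in\cI_{11}$; combined with $\cS_{11}\cap\cI_{11}=\emptyset$ this yields the disjoint decomposition $\cF_{11}=\cI_{11}\sqcup\cS_{11}$. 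Applying Lemma~\ref{Red} once more gives $\cF_{12}=\cS_{12}\cup\cR_{12}$, and for $\cA\in\cF_{12}\setminus\cS_{12}$ a deletion $\cA\setminus\{H\}$ lies in $\cF_{11}=\cI_{11}\sqcup\cS_{11}$, so $\cA\in\cI_{12}$ either by adding $H$ to an inductively free arrangement or by (1). Thus $\cF_{12}=\cI_{12}\sqcup\cS_{12}$.

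The main obstacle is the orbit count in part (1): showing that the many candidate lines $H$ satisfying $n_{\cA_1,H}=6$ collapse to exactly two $\on{GL}(3,\bC)$-orbits under the combined action of $\mathfrak{S}_{11}^{\ast}$ and Galois conjugation. This is a bookkeeping exercise that must be executed profile-by-profile; the verification that the second orbit is inductively free then follows by an explicit deletion chain analogous to the one in \S\ref{S11V}.
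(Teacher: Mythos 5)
Your overall skeleton is the same as the paper's: part (2) is complete and correct (it is exactly the paper's argument via Lemma \ref{Red} and part (1)), and for part (1) you set up the same four intersection profiles that the paper derives ($[1,5,0,0]$, $[2,3,1,0]$, $[3,2,0,1]$, $[4,0,1,1]$ in the paper's $F_H(\cA_1)$ notation, matching your $(k_3,k_2,k_1,k_0)$ lists), using the same observation that $M_2(\cA)\subset H_{11}$ forces $k_2\le 1$. The problem is that everything after the profile enumeration is asserted rather than proved, and that is where the entire content of the paper's proof lives.

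Concretely, three things are missing. First, you must show that the profiles $(0,0,5,1)$ and $(0,1,3,2)$ admit \emph{no} line at all. This is not optional bookkeeping: a line realizing either profile could not be $\on{GL}(3,\bC)$-equivalent to one realizing $(1,0,2,3)$ or $(1,1,0,4)$, since the intersection profile is a lattice invariant, so your claim that ``the remaining candidates collapse to a single second orbit'' is internally inconsistent unless these two profiles are first shown to be empty. The paper does this with two specific geometric arguments: for $(0,0,5,1)$, the ten points of $M_1(\cA)$ split into two pentagons whose sides are lines of $\cA$, so any line through five of them contains three vertices of one pentagon, hence two adjacent vertices, hence equals some $H_i\in\cA$; for $(0,1,3,2)$, transitivity of $\rho=(1,5,8,10,4)(2,6,9,3,7)$ on $M_2(\cA)$ reduces to checking that no three points of $M_1(\cA)$ share an $x$-coordinate. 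Second, for each of the two surviving profiles you must prove there is a \emph{unique} candidate line through a given quadruple point (the paper pins down $H=(x+(1+\zeta)(y-1)=0)$ and $H=(\zeta x-y+1=0)$ through $\{1,2,3,4\}$) before transitivity of $\rho$ on $M_3(\cA)$ gives exactly one orbit per profile. Third, the inductive freeness of the $[4,0,1,1]$ arrangement needs an explicit deletion chain (the paper removes $H_5$ then $H_9$ to land in $\cF_{10}=\cI_{10}$); ``repeat the blueprint of \S\ref{S11V}'' is a plan, not a verification, and there is no a priori guarantee that a suitable chain exists. As written, the proposal is a correct outline of the paper's strategy with its substantive steps deferred.
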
 
\begin{proof} 
\item{(1)} 
We may assume that $\cA$ has the description 
as in \S \ref{S11L} and \S \ref{S11R}. 
By Theorem \ref{ad}, we have $n_{\cA_1,H}=6$. 
Note that 
$F_{H,3}(\cA_1)\leq1$ since $M_2(\cA)\subset H_{11}$ 
and $F_{H,4}(\cA_1)\leq1$ since 
$\mu_{\cA_1,H}=11<1\cdot4+4\cdot2$. 
Thus 
$F_{H}(\cA_1)=[1,5,0,0],[2,3,1,0],[3,2,0,1]$ or $[4,0,1,1]$. 
 
Suppose $F_{H}(\cA_1)=[1,5,0,0]$. 
By the description in \S \ref{S11L}, we have 
$$M_1(\cA)=\left\{ 
\{1,8\},\{4,8\},\{4,5\},\{5,10\},\{1,10\} 
\right\}\cup\left\{ 
\{2,6\},\{2,7\},\{3,7\},\{3,9\},\{6,9\} 
\right\}. 
$$ 
However, since one of the above two sets contains 
$3$ elements of $H\cap M_1(\cA)$, $H$ coincides with 
some $H_i\in\cA$, which is a contradiction. 
Therefore $F_{H}(\cA_1)\neq[1,5,0,0]$. 
 
Suppose $F_{H}(\cA_1)=[2,3,1,0]$. 
Note that the permutation 
$\rho=(1,5,8,10,4)(2,6,9,3,7)$ 
is an element of ${\mathfrak S}_{11}^\ast$, 
and the group $\langle\rho\rangle$ acts on 
$M_2(\cA)$ or $M_3(\cA)$ 
transitively. 
Since $\rho$ is realized by 
the action of $\on{GL}(3,\bC)$ and 
$\on{Gal}(\bQ[\sqrt{5}]/\bQ)$, 
we may assume $\{1,9,11\}\in H$, i.e., $H\parallel(x=0)$. 
On the other hand, by the direct calculation, we have 
$$ 
M_1(\cA)= 
\left\{\begin{array}{lll} 
\{1,8\}=(0, -\zeta), & \{1,10\}=(0, \zeta+1), 
& \{2,6\}=(\zeta-1, 2-\zeta), 
\\ 
\{4,5\}=(-\zeta, 0), 
& \{5,10\}=(1+\zeta,0), 
& \{2,7\}=(2-\zeta,\zeta-1), 
\\ 
\{3,7\}=(\zeta-1, 1), 
& \{3,9\}=(1, 1), 
& \{4,8\}=(\zeta+1, \zeta+1). 
\\ 
\{6,9\}=(1, \zeta-1), 
\end{array}\right\}. 
$$ 
It is easy to see that no three points of $M_1(\cA)$ 
share the same $x$-coordinate, which contradicts to 
$F_{H,2}(\cA_1)=3$. 
Therefore $F_{H}(\cA_1)\neq[2,3,1,0]$. 
 
We show that, for each 
$\Gamma\in\{[3,2,0,1],[4,0,1,1]\}$ 
and for each $P\in M_3(\cA)$, exists 
the unique line $H\not\in\cA$ such that 
$P\in H$ and $F_H(\cA_1)=\Gamma$. 
First we assume $\{1,2,3,4\}\in H$. 
 
Suppose $\Gamma=[3,2,0,1]$. 
Since $H\not\in\cA$, 
we have $H\cap M_1(\cA)=\{\{5,10\},\{6,9\}\}$, 
and hence $H=(x+(1+\zeta)(y-1)=0)$. 
It is easy to check that $F_{H}(\cA_1)=[3,2,0,1]$. 
 
Suppose $\Gamma=[4,0,1,1]$. 
Since $H\not\in\cA$, 
we have $H\cap M_2(\cA)=\{\{7,8,11\}\}$, 
and hence $H=(\zeta x-y+1=0)$. 
It is easy to check that $F_{H}(\cA_1)=[4,0,1,1]$, 
 
We have seen that the unique $H$ exists for each $\Gamma$ 
if $P=\{1,2,3,4\}$.  To have the unique $H$ passing through 
another $P\in M_3(\cA)$, we have only to apply $\rho$ repeatedly. 
Therefore, $\cA_1$'s sharing the same $F_H(\cA_1)$ are transferred 
by the $\on{GL}(3,\bC)$-action. 
 
Next we show $\cA_1\in\cI_{12}$. 
Note that $\cA_1$ in \S \ref{S11V} 
satisfies $F_{H_{12}}(\cA_1)=[3,2,0,1]$ 
and $\cA_1\in\cI_{12}$. It follows that 
$\cA_1\in\cI_{12}$ when $F_{H}(\cA_1)=[3,2,0,1]$. 
We show that $\cA_1\in\cI_{12}$ when 
$F_{H}(\cA_1)=[4,0,1,1]$. We may assume 
$\cA_1=\cA\cup\{H\}$ with $H=(\zeta x-y+1=0)$. 
By Theorem \ref{ad}, we see that 
$\cA_1\in\cF_{12}$ with $\exp(\cA_1)=(1,5,6)$. 
We set $\cA_2=\cA_1\setminus\{H_{5}\}$ and 
$\cA_3=\cA_2\setminus\{H_9\}$. 
Since $n_{\cA_1,H_{5}}=6$, we have 
$\cA_2\in\cF_{11}$ with $\exp(\cA)=(1,5,5)$. 
Since $n_{\cA_2,H_9}=6$, we have 
$\cA_3\in\cF_{10}=\cI_{10}$. 
Therefore, we have 
$\cA_2\in\cI_{11}$ and $\cA_1\in\cI_{12}$. 
Thus we conclude that $\cA_1\in\cI_{12}$ 
for both cases of $F_{H}(\cA_1)$. 
\item{(2)} 
Since $\cF_{10}=\cI_{10}$ and $\cS_{11}\subset\cR_{11}$, we have 
$\cF_{11}=\cI_{11}\sqcup\cS_{11}=\cR_{11}$ by Lemma \ref{Red}. 
Let $\cA\in\cF_{12}\setminus\cS_{12}$.  Then, there exists 
$H\in\cA$ such that 
$\cA'=\cA\setminus\{H\}\in\cF_{11}=\cI_{11}\sqcup\cS_{11}$. 
If $\cA'\in\cI_{11}$, then $\cA\in\cI_{12}$. 
If $\cA'\in\cS_{11}$, we also have $\cA\in\cI_{12}$ by (1). 
Thus $\cF_{12}=\cI_{12}\sqcup\cS_{12}$. 
\end{proof} 
\end{subsection} 
\end{section} 
\begin{section}{Determination of $\cS_{12}$}\label{S12} 
In this section, we show that $\cS_{12}$ consists of 
monomial arrangements assocated to the group $G(4,4,3)$. 
 
\begin{subsection}{Realization of $\cA\in\cS_{12}$} 
 
Let $\cA\in\cS_{12}$. 
By Proposition \ref{Classify}, $F(\cA)=[0,16,3]$. 
 
We show that $F_H(\cA)=[0,4,1]$ for any $H\in\cA$. 
Note that $n_{\cA,H}=5$ for any $H\in\cA$, 
since $\sum_{H\in\cA}n_{\cA,H}=3\cdot16+4\cdot3=5\cdot12$. 
We have $F_{H,3}(\cA)\leq1$ for $H\in\cA$, since 
$\mu_{\cA,H}=11<2\cdot3+3\cdot2$. 
If 
$M_3(H_0,\cA)=\emptyset$ for some $H_0\in\cA$, 
then 
$11=\mu_{P,H_0}=2F_{H_0,2}(\cA)$, 
a contradiction. Thus, for any $H\in\cA$, we have 
$F_{H,3}(\cA)=1$, and hence $F_H(\cA)=[0,4,1]$. 
 
We determine the realization of $\cA$ in $\bP_{\bC}^2$. 
We may set 
$$M_3(\cA)=\{\{1,2,3,4\},\{5,6,7,8\},\{9,10,11,12\}\}.$$ 
Since $F_{H_9,2}(\cA)=4$, we may assume 
$M_2(H_9,\cA)=\{\{1,5,9\},\{2,6,9\},\{3,7,9\},\{4,8,9\}\}$. 
 
We may set $\overline{\{1,2,3,4\}\{5,6,7,8\}} 
\not\in\cA$ as 
the infinity line $H_{\infty}$ and 
$$ 
h_1=x,\ h_2=x-1,\ h_3=x-p,\ h_4=x-q,\ 
h_5=y,\ h_6=y-1,\ h_7=y-s,\ h_8=y-t. 
$$ 
where 
$ 
|\{0,1,p,q\}|=|\{0,1,s,t\}|=4$. 
By choice of $H_9$, we have $s=p$, $t=q$ and $h_9=x-y$. 
 
We set 
$\alpha_i=\prod_{j=0}^3h_{4i-j}$ for $1\leq i\leq 3$. 
Since $F_{H,2}(\cA)=4$ for any $H\in\cA$, 
we have 
$\on{V}(\alpha_1,\alpha_2)=M_2(\cA)\subset\on{V}(\alpha_3)$. 
Therefore we have $\alpha_3\in\sqrt{(\alpha_1,\alpha_2)} 
=(\alpha_1,\alpha_2)$. 
Since $\deg(\alpha_i)=4$ for $i=1,2,3$, there exists 
$a,b\in\bC\setminus\{0\}$ such that $\alpha_3=a\alpha_1+b\alpha_2$. 
Since $\alpha_3\in(x-y)$, we have $b=-a$.  Therefore 
we may set $a=1, b=-1$. Now we obtain 
$$\alpha_3=\alpha_1-\alpha_2=x(x-1)(x-p)(x-q)-y(y-1)(y-p)(y-q).$$ 
Set $\beta=h_{10}h_{11}h_{12}=\alpha_3/(x-y)$.  Then we have 
$$ 
\beta=x^3+x^2y+xy^2+y^3-(p+q+1)(x^2+xy+y^2) 
+(pq+p+q)(x+y)-pq. 
$$ 
Since 
$\beta$ is a symmetric polynomial in $x$ and $y$, 
we may set 
$$ 
\beta=u^{-1}(x+uy+v)(ux+y+v)(x+y-2w). 
\quad(u,v,w,\in\bC,\ u\neq0). 
$$ 
Then we have $\{9,10,11,12\}=(w,w)$ and hence 
$v=-(1+u)w$. 
Comparing coefficients of $x^2y$, $x^2$, $x$ 
and constant terms, we obtain 
$1 = u+1+u^{-1}$, 
$-p-q-1 = -w(u+4+u^{-1})$, 
$pq+p+q = 3u^{-1}(1+u)^2w^2$ 
and $-pq = -2u^{-1}(1+u)^2w^3$. 
Therefore we have 
$$ 
u^2=-1,\quad 
p+q = 4w-1,\quad 
pq+p+q = 6w^2,\quad 
pq = 4w^3. 
$$ 
and hence $2w-1=0, \pm\sqrt{-1}$. 
Now it is easy to show that 
$$ 
\{1,p,q\}= 
\left\{1, 
\pm\sqrt{-1}, 1 \pm \sqrt{-1} 
\right\}, 
\left\{1, 
\frac{1 + \sqrt{-1}}{2}, 
\frac{1 - \sqrt{-1}}{2} 
\right\}. 
$$ 
These 3 possibilities of $(p,q)$ are identified 
by the actions 
$(x,y)\mapsto(x/p,y/p)$ or $(x,y)\mapsto(x/q,y/q)$, 
which corresponds to the changing of scale so as to set 
$H_2$, $H_3$ or $H_4$ to be $(x=1)$. 
Here we adopt $\{p,q\}=\{\sqrt{-1}, 1+\sqrt{-1}\}$. 
Then we have 
$$\alpha_3=(x-y)(x-\sqrt{-1}y- 1) 
(x+y-1-\sqrt{-1})(x+\sqrt{-1}y-\sqrt{-1}).$$ 
 
Now we have obtained the unique realization of $\cA\in\cS_{12}$ 
up to the $\on{GL}(3,\bC)$-action. 
\end{subsection} 
\begin{subsection}{Verifying $\cA\in\cS_{12}\subset\cR_{12}$} 
We set $\cA_1=\cA\cup\{H_{\infty}\}$. 
It is easy to see that 
$n_{\cA_1,H_{\infty}}=6$. 
Since $\mu_{\cA_1}=\mu_{\cA}+6$, we have 
$\chi({\cA_1},t)=(t-1)(t-5)(t-7)$. 
Thus $\cA_1\in\cF_{13}$ with $\exp(\cA_1)=(1,5,7)$ 
by Theorem \ref{ABT}, and hence $\cA\in\cS_{12}$. 
We set $\cA_2=\cA_1\setminus\{H_{9}\}$ and 
$\cA_3=\cA_2\setminus\{H_{10}\}$. 
Since $n_{\cA_1,H_{9}}=6$, we have 
$\cA_2\in\cF_{12}$ with $\exp(\cA_2)=(1,5,6)$. 
Since $n_{\cA_2,H_{10}}=6$, we have 
$\cA_3\in\cF_{11}=\cR_{11}$. 
Therefore, $\cA_2\in\cR_{12}$, $\cA_1\in\cR_{13}$ 
and $\cA\in\cR_{12}$. 
 
\medskip 
 
In fact, to check whether $\cA\in\cF_{12}$ belongs to $\cS_{12}$ or not, 
we have only to check $F(\cA)$. 
\begin{lem} 
If $\cA\in\cF_{12}$ satisfies $F(\cA)=[0,16,3]$, 
then $\cA\in\cS_{12}$. 
\end{lem}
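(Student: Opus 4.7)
The plan is to verify directly the two conditions defining $\cS_{12}$: that $\cA$ has exponents $(1,a,b)$ with a specific value of $\min(a,b)$, and that every line $H\in\cA$ satisfies $n_{\cA,H}\leq\min(a,b)$. Since freeness is already part of the hypothesis $\cA\in\cF_{12}$, everything will follow from the $F$-vector data via Theorem \ref{factorization} together with a local count on each line.

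First I would read off the characteristic polynomial from $F(\cA)=[0,16,3]$. Using the identity $\mu_{\cA}=\sum_{i}iF_i(\cA)=2\cdot16+3\cdot3=41$ and the formula $\chi(\cA,t)=(t-1)\{t^2-(|\cA|-1)(t+1)+\mu_{\cA}\}$ given in the preliminaries, one obtains $\chi(\cA,t)=(t-1)(t^2-11t+30)=(t-1)(t-5)(t-6)$. Since $\cA$ is free, Theorem \ref{factorization} then forces $\exp(\cA)=(1,5,6)$, so $\min(a,b)=5$.

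Next I would bound $n_{\cA,H}$ for every $H\in\cA$. The inequality $F_{H,1}(\cA)\leq F_1(\cA)=0$ forces $F_{H,1}(\cA)=0$. Combining the two relations $\sum_{i}F_{H,i}(\cA)=n_{\cA,H}$ and $\sum_{i}iF_{H,i}(\cA)=\mu_{\cA,H}=|\cA|-1=11$ then reduces to $2F_{H,2}(\cA)+3F_{H,3}(\cA)=11$ together with $F_{H,2}(\cA)+F_{H,3}(\cA)=n_{\cA,H}$. Eliminating $F_{H,2}(\cA)$ yields $F_{H,3}(\cA)=11-2n_{\cA,H}$, and nonnegativity of $F_{H,3}(\cA)$ gives $n_{\cA,H}\leq 5=\min(a,b)$, which is exactly the defining inequality of $\cS_{12}$.

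There is essentially no obstacle here; the statement is an immediate consequence of $F_1(\cA)=0$, which forces intersection points on every line to be at least triple, severely constraining how many lines can meet any single $H$. This mirrors the analogous dual Hesse lemma in \S\ref{S9V}, proved by the same local counting argument.
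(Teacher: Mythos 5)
Your proof is correct, and it is in fact more direct than the paper's. Both arguments rest on the same two local identities $\sum_i F_{H,i}(\cA)=n_{\cA,H}$ and $\sum_i iF_{H,i}(\cA)=\mu_{\cA,H}=11$, together with the trivial bound $F_{H,i}(\cA)\leq F_i(\cA)$, which kills $F_{H,1}(\cA)$ and also $F_{H,i}(\cA)$ for $i\geq4$ (you use the latter silently when you write the relation as $2F_{H,2}(\cA)+3F_{H,3}(\cA)=11$; it deserves one explicit word, though it is equally immediate from $F(\cA)=[0,16,3]$). From there you solve the system on each line individually and read off $F_{H,3}(\cA)=11-2n_{\cA,H}\geq0$, hence $n_{\cA,H}\leq5$ for every $H$, which is the whole statement. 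The paper instead argues by contradiction: if some $H_1$ had $n_{\cA,H_1}\geq6$, the global count $\sum_{H}n_{\cA,H}=\sum_i(i+1)F_i(\cA)=60$ would produce a line $H_2$ with $n_{\cA,H_2}\leq4$, forcing $F_{H_2}(\cA)=[0,1,3]$ so that $H_2$ carries all three quadruple points; a third line $H_3$ through the unique triple point of $H_2$ then misses $M_3(\cA)$ and gives the parity contradiction $11=\mu_{\cA,H_3}=2F_{H_3,2}(\cA)$. Your nonnegativity argument subsumes all of this, showing directly that $n_{\cA,H}\in\{4,5\}$ for every line with no need for the auxiliary lines or the global sum. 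Your explicit derivation of $\chi(\cA,t)=(t-1)(t-5)(t-6)$ and $\exp(\cA)=(1,5,6)$ is also welcome, since the paper leaves the identification $\min(a,b)=5$ implicit.
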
 
\begin{proof} 
If $\cA\not\in\cS_{12}$, there exists $H_1\in\cA$ 
such that $n_{\cA,H_1}\geq6$.  Since 
$\sum_{H\in\cA}n_{\cA,H}=5\cdot12$, 
there exists $H_2\in\cA$ 
such that $n_{\cA,H_2}\leq4$.  Since $F(\cA)=[0,16,3]$, 
we have $F_{H_2}(\cA)=[0,1,3]$. Take $H_3\in\cA\setminus\{H_2\}$ 
such that $\mu_{H_2\cap H_3}(\cA)=2$.  Then, since 
$H_3\cap M_3(\cA)=\emptyset$, we have 
$12-1=\mu_{\cA,H_3}=2n_{\cA,H_3}$, a contradiction. 
\end{proof} 
\begin{defn}\label{443} 
An arrangement in $\bC^3$ is called 
a monomial arrangement associated to 
the group $G(4,4,3)$ (see B.1 of \cite{OT}), 
if it is $\on{GL}(3,\bC)$-equivalent to 
$(\varphi_{4,4,3}=0)$, where 
$$\varphi_{4,4,3}= 
(x^4-y^4)(y^4-z^4)(z^4-x^4). 
$$ 
 
\end{defn} 
It is easy to see that 
$\cA=(\varphi_{4,4,3}=0)$ 
satisfies $F(\cA)=[0,16,3]$. 
Therefore, 
$$\cS_{12}=\{ 
\text{monomial arrangements 
associated to the group\ }G(4,4,3)\}\subset\cR_{12}.$$ 
By Proposition \ref{11-12}, we have 
$\cF_{12}=\cS_{12}\sqcup\cI_{12}=\cR_{12}$. 
Thus Theorem \ref{Main} holds for $|\cA|=12$. 
\end{subsection} 
\end{section} 
\begin{section}{Example in $\cF_{13}\setminus\cR_{13}$}\label{13} 
In this section, we construct the example 
$\cA\in\cF_{13}\setminus\cR_{13}$. 
\begin{subsection}{Defining equation of $\cA$} 
Let $\cA_0=\{H_1,\ldots,H_{12}\}$ be the line arrangement 
in $\bC^2$, where $H_i$ is defined by $h_i$ below 
for each $1\leq i\leq 12$, 
with a generic parameter $\lambda\in\bC$.

\begin{minipage}{0.45\hsize} 
\begin{eqnarray*} 
h_1&=&-\sqrt{3}x-y+{\lambda}+1, 
\\ 
h_2&=&2y+{\lambda}+1, 
\\ 
h_3&=&\sqrt{3}x-y+{\lambda}+1, 
\\ 
h_4&=&\sqrt{3}x-y+{\lambda}-2, 
\\ 
h_5&=&-\sqrt{3}x-y+{\lambda}-2, 
\\ 
h_6&=&2y+{\lambda}-2, 
\\ 
h_7&=&2y-2{\lambda}+1, 
\\ 
h_8&=&\sqrt{3}x-y-2{\lambda}+1, 
\\ 
h_9&=&-\sqrt{3}x-y-2{\lambda}+1, 
\\ 
h_{10}&=&({\lambda}+1)y+\sqrt{3}(1-{\lambda})x-{\lambda}^2+{\lambda}-1, 
\\ 
h_{11}&=&\sqrt{3}{\lambda}x+({\lambda}-2)y-{\lambda}^2+{\lambda}-1, 
\\ 
h_{12}&=&(1-2{\lambda})y-\sqrt{3}x-{\lambda}^2+{\lambda}-1. 
\end{eqnarray*} 
\end{minipage} 
\begin{minipage}{0.47\hsize} 
\begin{center} 
\begin{picture}(180,180)(0,0) 
\setlength\unitlength{0.5pt} 
 \put(10,210){\line(1,0){340}} 
 \put(-20,204){$H_6$} 
 \put(10,187){\line(1,0){340}} 
 \put(-20,179){$H_7$} 
 \put(10,147){\line(1,0){340}} 
 \put(-20,140){$H_2$} 
 \put(65,315){\line(3,-5){170}} 
 \put(230,10){$H_5$} 
 \put(93,315){\line(3,-5){160}} 
 \put(260,40){$H_9$} 
 \put(129,335){\line(3,-5){150}} 
 \put(285,75){$H_1$} 
 \put(124,30){\line(3,5){150}} 
 \put(274,290){$H_4$} 
 \put(96,30){\line(3,5){140}} 
 \put(230,275){$H_8$} 
 \put(66,60){\line(3,5){150}} 
 \put(212,320){$H_3$} 
\qbezier(62,67)(192,167)(322,267) 
 \put(325,275){$H_{11}$} 
\qbezier(94,233)(218,187)(342,141) 
 \put(59,235){$H_{10}$} 
\qbezier(136,336)(156,210)(176,84) 
 \put(165,60){$H_{12}$} 
 \end{picture} 
{{\sc Figure 1.}   $\cA_0$ with $\lambda=2/3$} 
\end{center} 
\end{minipage} 
 
\bigskip 
 
Note that $H_{3i-2}\mapsto H_{3i-1}\mapsto H_{3i}$ 
is obtained by the rotation with angle $-2\pi/3$. 
We will show that the cone $\cA=\on{c}\cA_0$ of $\cA_0$ 
satisfies $\cA\in\cF_{13}\setminus\cR_{13}$. 
Set $\cA=\cA_0\cup\{H_{13}\}$ where $H_{13}$ 
is the infinity line $H_{\infty}$. 
By calculation 
(or by reading off from the figure), we see that 
\begin{eqnarray*} 
M_2(\cA)&=&\left\{\{1,6,8\},\{2,4,9\},\{3,5,7\}\right\}, 
\\ 
M_3(\cA)&=&\{ 
\{1,5,9,13\}, 
\{2,6,7,13\}, 
\{3,4,8,13\} 
\}\subset H_{13}, 
\\ 
M_4(\cA)&=&\left\{ 
\{1,4,7,10,11\}, 
\{2,5,8,11,12\}, 
\{3,6,9,10,12\} 
\right\}. 
\end{eqnarray*} 
Other intersection points of $\cA$ form $M_1(\cA)$. 
It follows that 
$F(\cA)=[21,3,3,3]$, $n_{\cA,H}=6$ for any $H\in\cA$, 
and $\chi({\cA},t)=(t-1)(t-6)^2$. 
Thus $\cA\in\cS_{13}$ provided $\cA\in\cF_{13}$. 
\end{subsection} 
\begin{subsection}{Freeness of $\cA$} 
We show that $\cA\in\cF_{13}$ 
in terms of Yoshinaga's criterion in \cite{Y}. 
 
Let 
$(\cA'',m)$ be 
the Ziegler restriction of $\cA$ onto $H_\infty$. 
It is defined by 
$$ 
y^3 (-\sqrt{3}x-y)^3(\sqrt{3}x-y)^3 \{ 
(\lambda+1)y+\sqrt{3}(1-\lambda)x)\} 
\{ 
\sqrt{3}\lambda x+(\lambda-2)y\} 
\{ 
(1-2\lambda)y-\sqrt{3}x\}=0. 
$$ 
By the change of coordinates 
$$ 
u=y+\sqrt{3}x,\ v=y-\sqrt{3}x, 
$$ 
the defining equation of $(\cA'',m)$ becomes 
$$ 
u^3v^3(u+v)^3 (u+\lambda v)(u+v-\lambda u)(\lambda u+\lambda v-v)=0. 
$$ 
Now recall the following. 
 
\begin{prop}[\cite{Y}]\label{yoshinaga} 
Let $\cB$ be a central arrangement in $\bC^3$, $H \in \cB$ and 
let $(\cB'',m)$ be the Ziegler restriction of $\cB$ onto $H$. Assume that 
$\chi(\cB,t)=(t-1)(t-a)(t-b)$, and 
$\exp(\cB'',m)=(d_1,d_2)$. 
Then $\cB$ is free if and only if 
$ab=d_1d_2$. 
\end{prop}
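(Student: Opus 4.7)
The plan is to use a natural restriction homomorphism from $D(\cB)$ to $D(\cB'',m)$ together with Saito's criterion (Theorem \ref{saito}), reducing freeness of $\cB$ to a numerical condition on exponents. First, recall that every multi-arrangement in $\bC^2$ is free, so $\exp(\cB'',m) = (d_1,d_2)$ is a well-defined pair, and the multi-version of Saito's criterion forces $d_1 + d_2 = |m| = |\cB| - 1$. On the other hand, expanding $\chi(\cB,t) = (t-1)(t-a)(t-b)$ and comparing the $t^2$-coefficient with the general formula $\chi(\cB,t) = t^3 - |\cB|\, t^2 + \cdots$ gives $a + b = |\cB| - 1$ as well. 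Consequently the identity $ab = d_1 d_2$ is equivalent to the equality of unordered pairs $\{a,b\} = \{d_1,d_2\}$.

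Next, I would introduce the restriction map $\rho \colon D(\cB) \to D(\cB'',m)$ obtained by reducing a derivation modulo $\alpha_H$ (choose coordinates so that $\alpha_H = z$, so that only the $\partial/\partial x$ and $\partial/\partial y$ components survive after reduction). One checks directly from the defining membership conditions that $S\theta_E \subset \ker\rho$ and that $\rho$ descends to an injection on $D_0(\cB) := D(\cB)/S\theta_E$. For the easy direction, if $\cB$ is free with basis $\theta_E,\theta_1,\theta_2$ of degrees $1,a,b$, then Ziegler's theorem asserts that $\rho(\theta_1),\rho(\theta_2)$ form a free basis of $D(\cB'',m)$ of degrees $a,b$, so $\{d_1,d_2\}=\{a,b\}$ and hence $ab = d_1 d_2$.

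The main obstacle is the converse: given $ab = d_1 d_2$, one must lift a free basis $\eta_1,\eta_2$ of $D(\cB'',m)$ of degrees $d_1,d_2$ to derivations $\tilde\eta_1,\tilde\eta_2 \in D(\cB)$ of the same degrees. Once accomplished, $\theta_E,\tilde\eta_1,\tilde\eta_2$ are $S$-independent (since their images under $\rho$ are), and the sum of their degrees equals $|\cB|$, so Saito's criterion immediately yields freeness. The cleanest route to the lift is sheaf-theoretic: the sheafification $\mathcal{F}$ of $D_0(\cB)$ on $\bP^2$ is a rank-$2$ reflexive and hence locally free sheaf, whose Chern classes can be read off from $\chi(\cB,t)$, with $c_1(\mathcal{F}) = -(a+b)$ and $c_2(\mathcal{F}) = ab$. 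Under the hypothesis $ab = d_1 d_2$, these match the Chern classes of $\mathcal{O}_{\bP^2}(-d_1) \oplus \mathcal{O}_{\bP^2}(-d_2)$; combined with the existence of global sections in degrees $d_1$ and $d_2$ produced from $\eta_1, \eta_2$ via a partial lift along the conormal sequence of $H$, a Horrocks-style splitting criterion forces $\mathcal{F}$ to decompose as a direct sum of line bundles, which is equivalent to freeness of $\cB$.
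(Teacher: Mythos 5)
The paper itself gives no proof of this proposition; it is quoted from Yoshinaga's paper \cite{Y}, so your attempt has to be measured against the argument there. Your reductions are sound: since $d_1+d_2=|m|=|\cB|-1=a+b$, the condition $ab=d_1d_2$ is indeed equivalent to $\{a,b\}=\{d_1,d_2\}$, and the forward implication (free $\Rightarrow ab=d_1d_2$) is exactly Ziegler's theorem \cite{Z} that a basis of $D_H(\cB)=\{\theta\in D(\cB)\mid\theta(\alpha_H)=0\}$ restricts to a basis of $D(\cB'',m)$. (A small slip: the restriction map does not descend to an injection on $D(\cB)/S\theta_E$, since $\alpha_H\theta$ dies for every $\theta\in D_H(\cB)$; what you actually need, and what is true, is that $\theta_E$ together with preimages of $S''$-independent elements is $S$-independent.)

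The converse, however, contains a genuine gap, located exactly where you place ``the main obstacle.'' Knowing that the rank-two bundle $\widetilde{D_0(\cB)}$ on $\bP^2$ has the same Chern classes as $\mathcal O_{\bP^2}(-d_1)\oplus\mathcal O_{\bP^2}(-d_2)$ does not force it to split: non-split rank-two bundles on $\bP^2$ with $c_1=0$ and $c_2=0$ exist (e.g.\ via the Serre construction from a point), so a Horrocks-type criterion needs cohomological input, not merely numerical input. The input you invoke --- nonzero global sections of $\mathcal F(d_1)$ and $\mathcal F(d_2)$ ``produced from $\eta_1,\eta_2$ via a partial lift along the conormal sequence'' --- is precisely the assertion that the Ziegler restriction map $\rho\colon D_H(\cB)\to D(\cB'',m)$ hits $\eta_1$ and $\eta_2$, i.e.\ is surjective in degrees $d_1$ and $d_2$; that surjectivity is the entire content of the theorem, so assuming it makes the argument circular. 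Yoshinaga closes this gap by analyzing the cokernel of $\rho$ in the exact sequence $0\to D_H(\cB)(-H)\to D_H(\cB)\xrightarrow{\ \rho\ } D(\cB'',m)$ through local computations at the points of $H\cap L_2(\cB)$, obtaining the inequality $ab\ge d_1d_2$ with equality if and only if $\rho$ is surjective, which in turn is equivalent to freeness of $\cB$. Without that inequality, or an equivalent lifting argument, your proof of the ``if'' direction does not close.
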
 
 
Also, recall that, 
for a central multiarrangement $(\cC,m)$ in $\bC^2$ 
with $\exp(\cC,m)=(e_1,e_2)$ $(e_1 \le e_2)$, 
it holds that $e_1+e_2=|m|=\sum_{H \in \cC} m(H)$ and 
$e_1=\min_{d \in \bZ} \{d \mid D(\cC,m)_d \neq 0\}$. 
This follows from the fact that 
$D(\cC,m)$ is a rank two free module. For example, see \cite{A}. 
 
Now since $\chi(\cA,t)=(t-1)(t-6)^2$, 
it suffices to show that 
every homogeneous derivation of degree five is zero. 
 
Assume that $\theta \in D(\cA'',m)$ is homogeneous of 
degree five and show that $\theta=0$. 
To check it, first, let us introduce a submultiarrangement $(\cB,m')$ 
of $(\cA'',m)$ defined by 
$$ 
u^3v^3 (u+v)^3=0. 
$$ 
The freeness of $(\cB,m')$ is well-known. 
In fact, we can give its explicit basis as follows. 
$$ 
\partial_1=(u+2v)u^3\partial_u- 
(2u+v)v^3\partial_v,\ 
\partial_2=(u+3v)vu^3 \partial_u+ 
(3u+v)uv^3 \partial_v. 
$$ 
So $\exp(\cB,m')=(4,5)$. Since $D(\cB,m') \supset D(\cA'',m)$, 
there are scalars $a,b,c \in \bC$ such that 
$$ 
\theta=(au+bv)\partial_1+c\partial_2. 
$$ 
The scalars $a,b$ and $c$ are determined by the tangency conditions 
to the three lines 
$u+\lambda v=0,\ u+v-\lambda u=0,\ \lambda u+\lambda v-v=0$. 
Then a direct computation shows that $a,b$ and $c$ satisfy 
$$ 
\lambda(\lambda-1) 
\begin{pmatrix} 
\lambda & -1 &-\lambda(\lambda+1)\\ 
1 & \lambda-1 & -(\lambda-1)(\lambda-2)\\ 
\lambda-1 & -\lambda & -\lambda(\lambda-1)(2\lambda-1) 
\end{pmatrix} 
\begin{pmatrix} 
(\lambda^2-\lambda+1)a\\ 
(\lambda^2-\lambda+1)b\\ 
c 
\end{pmatrix}=0. 
$$ 
The above linear equations imply that 
$$ 
-\lambda(\lambda-1) 
(\lambda-2)(2\lambda-1)(\lambda+1)(\lambda^2-\lambda+1) 
\begin{pmatrix} 
(\lambda^2-\lambda+1)a\\ 
(\lambda^2-\lambda+1)b\\ 
c 
\end{pmatrix}=0. 
$$ 
Since $\lambda$ is generic, it holds that $a=b=c=0$. 
Hence 
$\theta=0$, that is to say, $D(\cA'',m)_5=0$. 
Now apply 
Proposition \ref{yoshinaga} to 
show that $\cA$ is free with $\exp(\cA)=(1,6,6)$. 
 
\medskip 
 
We can also construct the basis 
of $D(\cA)$ explicitly and give 
an alternative proof of the freeness of $\cA$ 
by Theorem \ref{saito}. 
However, we omit to describe it here 
because of its lengthy. 
\end{subsection} 
\begin{subsection}{Non-recursive freeness of $\cA$}\label{13NRC} 
We assume that $\cA\in\cR_{13}$ and deduce the contradiction. 
Recall that $\cA\in\cS_{13}$ with $\exp(\cA)=(1,6,6)$. 
Thus, there exists a line 
$H\subset\bP_{\bC}^2$ such that 
$\cA_1=\cA\cup\{H\}\in\cF_{14}$. 
Note that $n_{\cA_1,H}=7$ by Theorem \ref{ad}. 
\begin{paragraph}{\textbf{Step 1}} 
{\it $F_{H,2}(\cA_1)\leq3$.} 
 
Assume that $|H\cap M_1(\cA)|\geq4$. 
By the description of $M_i(\cA)$ for $2\leq i\leq 4$, we have 
$$ 
M_1(\cA)=\left\{\begin{array}{lllllll} 
 \{1,2\}, &\{2,10\}, &\{4,5\}, &\{5,10\}, &\{7,8\}, &\{8,10\}, &\{10,13\}, 
\\ 
 \{2,3\}, &\{3,11\}, &\{5,6\}, &\{6,11\}, &\{8,9\}, &\{9,11\}, &\{11,13\}, 
\\ 
 \{3,1\}, &\{1,12\}, &\{6,4\}, &\{4,12\}, &\{9,7\}, &\{7,12\}, &\{12,13\} 
\end{array}\right\}.$$ 
Note that the points in each column are transferred each other 
by the rule on indices except for $13$ as 
$3i+1\mapsto 3i+2\mapsto 3i+3\mapsto 3i+1$, 
which corresponds to the rotation of lines with angle $-2\pi/3$. 
Note that some row contains 2 points on $H$. 
By symmetry, we may assume 
$$\left|H\cap\left\{ 
 \{1,2\}, \{2,10\}, \{4,5\}, \{5,10\}, \{7,8\}, \{8,10\}, \{10,13\} 
\right\}\right|\geq2.$$ 
Since $H\not\in\cA$, $H$ is one of the following. 
\begin{eqnarray*} 
&& 
\overline{ 
\{1,2\}\{4,5\} 
},\ 
\overline{ 
\{1,2\}\{5,10\} 
},\ 
\overline{ 
\{1,2\}\{7,8\} 
},\ 
\overline{ 
\{1,2\}\{8,10\} 
},\ 
\overline{ 
\{1,2\}\{10,13\} 
},\ 
\overline{ 
\{2,10\}\{4,5\} 
},\ 
\\&& 
\overline{ 
\{2,10\}\{7,8\} 
},\ 
\overline{ 
\{4,5\}\{7,8\} 
},\ 
\overline{ 
\{4,5\}\{8,10\} 
},\ 
\overline{ 
\{4,5\}\{10,13\} 
},\ 
\overline{ 
\{5,10\}\{7,8\} 
},\ 
\overline{ 
\{7,8\}\{10,13\} 
} 
\end{eqnarray*} 
However, it follows by the direct calculation that 
$n_{\cA_1,H}=10$ if $\{10,13\}\in H$ and 
$n_{\cA_1,H}=11$ if $\{10,13\}\not\in H$, 
both contradicting to $n_{\cA_1,H}=7$. 
Thus 
$F_{H,2}(\cA_1)=|H\cap M_1(\cA)|\leq3$. 
\end{paragraph} 
\begin{paragraph}{\textbf{Step 2}} 
{\it $\sum_{i=3}^5F_{H,i}(\cA_1)\leq1$.} 
 
If $\sum_{i=3}^5F_{H,i}(\cA_1)\geq2$, 
then there exist 2 points 
$P,Q\in\bigcup_{i=2}^4M_i(\cA)$ 
such that $H=\overline{PQ}$. 
If $(\mu_P(\cA),\mu_Q(\cA))=(2,3),(2,4),(3,3),(3,4)$ or $(4,4)$, 
we have $\overline{PQ}\in\cA$ by the description of 
$M_i(\cA)$ for $2\leq i\leq4$.  Thus we may assume 
$P,Q\in M_2(\cA)$. By symmetry, we may assume 
$H=\overline{\{1,6,8\}\{2,4,9\}}$. However, 
by calculation, we have $n_{\cA_1,H}=9\neq7$, 
a contradiction. 
\end{paragraph} 
 
\medskip 
 
By the above steps, we see that 
$F_H(\cA_1)=[3,3,0,1]$ or 
$[4,2,0,0,1]$. 
 
\begin{paragraph}{\textbf{Step 3}} 
{\it 
The case of $F_H(\cA_1)=[3,3,0,1]$.} 
 
Since $|H\cap M_3(\cA)|=F_{H,4}(\cA_1)=1$, we may assume 
$\{2,6,7,13\}\in H$ by symmetry. 
Thus $H$ is parallel to $x$-axis. 
Set $B=\bigcup_{i=2,6,7,13}H_i$, where 
$H_2$, $H_6$, $H_7$ are parallel to $x$-axis 
and $H_{13}$ is the infinity line. 
Then we have 
$\left|M_1(\cA)\setminus B\right|=9$. 
We can check by calculation that 
the $y$-coordinates of the points in 
$M_1(\cA)\setminus B$ 
differ each other. 
Thus we have $F_{H,2}(\cA_1)=|H\cap M_1(\cA)|\leq2$, 
a contradiction. 
\end{paragraph} 
\begin{paragraph}{\textbf{Step 4}} 
{\it 
The case of $F_H(\cA_1)=[4,2,0,0,1]$.} 
 
Since $|H\cap M_4(\cA)|=F_{H,5}(\cA_1)=1$, 
we may assume $Q=\{1,4,7,10,11\}\in H$ by symmetry. 
Since $H\not\in\cA$, we have 
$H\cap M_1(\cA)\subset\{\{2,3\}, \{5,6\}, \{8,9\}, \{12,13\}\}$. 
Since $|H\cap M_1(\cA)|=F_{H,2}(\cA_1)=2$, we can take 
$P\in H\cap \{\{2,3\}, \{5,6\}, \{8,9\}\}$. 
Then $H=\overline{PQ}$.  For 
$P=\{2,3\}, \{5,6\}$ or $\{8,9\}$, 
by calculation, we have 
$n_{\cA_1,H}=8\neq7$, 
a contradiction. 
\end{paragraph} 
 
\medskip 
 
Therefore we conclude that $\cA\not\in\cR_{13}$. 
Now the proof of Theorem \ref{Main} is completed. 
 
\bigskip 
 
Below is the code of Maple we used to check the 
calculations in \S \ref{13NRC}. 
\begin{verbatim} 
# Rotate_Function 
RF:=f->simplify(subs( 
    {s=(-x-sqrt(3)*y)/2,t=(sqrt(3)*x-y)/2},subs({x=s,y=t},f) )): 
 
# Seeds for equations by rotation 
S:=[-3^(1/2)*x-y+A+1,3^(1/2)*x-y+A-2,2*y-2*A+1, 
    -3^(1/2)*x*A+y*A+3^(1/2)*x+y-A^2+A-1]; 
 
# All equations 
Eq:=map(f->collect(f,[x,y]), 
    [seq([S[i],RF(S[i]),RF(RF(S[i]))][],i=1..4)] ); 
 
# Intersection Points, Abbreviated version 
IP:=(f,g)->simplify(subs(solve({f,g},{x,y}),[x,y])): 
 P:=(a,b)->IP(Eq[a],Eq[b]): 
 
# Line through 2 points 
H := (A, B)-> simplify( 
     (B[1]-A[1])*y-(B[2]-A[2])*x+(A[1]*B[2]-A[2]*B[1]) ): 
 
# number of intersections of a line T and Eq 
C:=T->nops({simplify(map( 
   f->solve(subs(y=simplify(solve(T,y)),f),x) ,Eq ))[]}): 
 
# number of intersect. of a line through (a,b),(c,d) and Eq 
CC:=(a,b,c,d)->C(H(P(a,b),P(c,d))): 
 
# number of intersect. of a line through (a,b),(10,13) and Eq 
CCC:=(a,b)->C(Eq[10]+ 
       solve(subs({x=P(a,b)[1],y=P(a,b)[2]},Eq[10]+U),U) ): 
 
# Verification for Step 1 
CC(1,2,4,5) ; CC(1, 2,5,10); CC(1, 2,7,8); CC(1,2,8,10); 
CCC(1,2)    ; CC(2,10,4,5) ; CC(2,10,7,8); CC(4,5,7, 8); 
CC(4,5,8,10); CCC(4, 5)    ; CC(5,10,7,8); CCC(7,8)    ; 
 
# Verification for Step 2 
CC(1,6,2,4); 
 
# Verification for Step 3 
S:={map(f->P(f[])[2],[ 
[3, 1], [3,11], [1,12], [4, 5], [5,10], 
[4,12], [8, 9], [8,10], [9,11]          ])[]}: 
nops(S); 
 
# Verification for Step 4 
CC(1,4,2,3); CC(1,4,5,6); CC(1,4,8,9); 
\end{verbatim} 
\end{subsection} 
\end{section}

\end{document}